\documentclass[11pt,reqno]{amsart}

\usepackage[T1]{fontenc}
\usepackage[utf8]{inputenc}
\usepackage[english]{babel}
\usepackage{lmodern}
\usepackage{microtype}
\usepackage{amsmath,amssymb,amsthm,mathtools}
\usepackage{xcolor}
\usepackage{tikz}
\usepackage{hyperref}

\newtheorem{theorem}{Theorem}[section]
\newtheorem{proposition}[theorem]{Proposition}
\newtheorem{corollary}[theorem]{Corollary}
\newtheorem{lemma}[theorem]{Lemma}
\newtheorem{conjecture}[theorem]{Conjecture}
\newtheorem*{introtheorem}{Theorem}
\theoremstyle{definition}

\newtheorem{example}[theorem]{Example}

\newtheorem{remark}[theorem]{Remark}

\DeclareMathOperator{\code}{code}
\newcommand{\setN}{\mathbb{N}}
\newcommand{\setZ}{\mathbb{Z}}
\newcommand{\xvec}{\mathbf{x}}
\newcommand{\symS}{S}
\newcommand{\schubertS}{\mathfrak{S}}
\newcommand{\keypoly}{\kappa}
\newcommand{\stretchop}{\mathbin{\ast}}
\newcommand{\stretchperm}[2]{#1\stretchop #2}
\newcommand{\defin}[1]{%
  \relax\ifmmode%
  \textcolor{blue}{#1}%
  \else\textcolor{blue}{\emph{#1}}%
  \fi%
}

\newcommand{\rotheDiagram}[2][]{%
\begin{tikzpicture}[baseline=(current bounding box.south),#1]
  \def\perm{#2}
  \def\rotheN{0}
  \foreach \val [count=\i] in \perm {\xdef\rotheN{\i}}
  \draw[thin,gray!30] (0,0) grid (\rotheN,\rotheN);
  \foreach \wi [count=\i] in \perm {
    \foreach \wj [count=\j] in \perm {
      \ifnum\i<\j
        \ifnum\wi>\wj
          \draw[thick,fill=gray!25,fill opacity=.72]
            (\wj-1,\i-1) rectangle (\wj,\i);
        \fi
      \fi
    }
  }
  \foreach \wi [count=\i] in \perm {
    \draw[line width=1.2pt]
      (\wi-.5,\i-.5) -- (\rotheN,\i-.5);
    \draw[line width=1.2pt]
      (\wi-.5,\i-.5) -- (\wi-.5,\rotheN);
    \fill (\wi-.5,\i-.5) circle (.15);
  }
  \draw[very thick] (0,0) rectangle (\rotheN,\rotheN);
\end{tikzpicture}%
}
\newcommand{\rotheDiagramWithBlocks}[2][]{%
\begin{tikzpicture}[baseline=(current bounding box.south),#1]
  \def\perm{#2}
  \def\rotheN{0}
  \foreach \val [count=\i] in \perm {\xdef\rotheN{\i}}
  \draw[thin,gray!30] (0,0) grid (\rotheN,\rotheN);
  \foreach \wi [count=\i] in \perm {
    \foreach \wj [count=\j] in \perm {
      \ifnum\i<\j
        \ifnum\wi>\wj
          \def\cellfill{gray!25}
          \ifnum\wj>1
            \ifnum\wj<5
              \def\cellfill{blue!18}
            \fi
          \fi
          \ifnum\wj>6
            \ifnum\wj<9
              \def\cellfill{green!18}
            \fi
          \fi
          \draw[thick,fill=\cellfill,fill opacity=.82]
            (\wj-1,\i-1) rectangle (\wj,\i);
        \fi
      \fi
    }
  }
  \foreach \wi [count=\i] in \perm {
    \draw[line width=1.2pt]
      (\wi-.5,\i-.5) -- (\rotheN,\i-.5);
    \draw[line width=1.2pt]
      (\wi-.5,\i-.5) -- (\wi-.5,\rotheN);
    \fill (\wi-.5,\i-.5) circle (.15);
  }
  \draw[very thick] (0,0) rectangle (\rotheN,\rotheN);
\end{tikzpicture}%
}

\title[Stretched Schubert and Key Coefficients]
{Polynomiality of Stretched Schubert Structure Constants and Key Coefficients}
\author{Per Alexandersson}
\address{Department of Mathematics, Stockholm University,
  SE-106 91 Stockholm, Sweden}
\email{per.w.alexandersson@gmail.com}
\subjclass[2020]{Primary 05E05; Secondary 14N15, 05A15}
\keywords{Schubert polynomial, key polynomial, Schubert structure constant,
  Demazure operator, vector partition function, Rothe diagram}
\date{}

\begin{document}

\begin{abstract}
We prove that monomial coefficients in affine families of key and Schubert
polynomials are eventually polynomial.  The proof combines Demazure operators
with vector partition functions and, in the Schubert case, P.~Magyar's
orthodontic formula.  These coefficient results extend to finite products.  Using
M.~Watanabe's Schubert duality, we deduce that stretched Schubert structure
constants are eventually polynomial, proving a conjecture of I.~Pak and
Z.~Slonim.  For key polynomials, this resolves the polynomiality part of a
conjecture of P.~Alexandersson and E.~Alhajjar.
\end{abstract}

\maketitle

\section{Introduction}
\label{sec:introduction}

Schubert polynomials form a distinguished basis of the polynomial ring, and
their multiplication is governed by the \defin{Schubert structure constants}.
For permutations \(u,v\), we define
\(\defin{c_{u,v}^w}\) by
\[
\schubertS_u\schubertS_v
=\sum_w c_{u,v}^w\schubertS_w.
\]
Let \(\defin{\code(u)}\) denote the Lehmer code of \(u\), and let
\(\defin{\ell(u)}\) denote its inversion number.  For a positive integer
\(N\), we define
\[
\defin{\stretchperm{N}{u}}\coloneqq\code^{-1}\bigl(N\code(u)\bigr).
\]
If \(N\code(u)=(c_1,\dotsc,c_h)\), we append \(t\) trailing zeroes with
\(c_i\leq h+t-i\) for all \(i\); the resulting finite sequence is then a
Lehmer code.  Increasing \(t\) only appends fixed points to the corresponding
permutation.
I.~Pak and Z.~Slonim proved that the resulting stretched Schubert structure
constants are eventually quasipolynomial~\cite{PakSlonim2026x}, and
conjectured that the period is always one.  We prove their conjecture by first
considering affine monomial coefficients of stretched Schubert polynomials.

For
\[
\defin{\xvec}\coloneqq(x_1,\dotsc,x_h),
\qquad
\defin{\xvec^a}\coloneqq x_1^{a_1}\dotsm x_h^{a_h}
\quad\text{for }a\in\setZ^h,
\]
we write \(\defin{[\xvec^a]f}\) for the coefficient of the monomial
\(\xvec^a\) in \(f\).
Classical Kostka numbers are Schur monomial coefficients.  Following the
Schubert--Kostka matrix terminology of A.~Postnikov and R.~Stanley
\cite[Sec.~17]{PostnikovStanley2008}, we call the analogous monomial
coefficients of Schubert polynomials Schubert--Kostka coefficients; similarly,
we call the monomial coefficients of key polynomials key--Kostka coefficients.
We write
\(\defin{\setN}\coloneqq\{0,1,2,\dotsc\}\).  A function of \(N\) is
\defin{eventually polynomial} if there is a polynomial \(P(N)\) such that the
function agrees with \(P(N)\) for all \(N\gg0\).

The remaining definitions used in these statements are recalled below.  The
same monomial-coefficient question can be asked for key polynomials, or
type~\(A\) Demazure characters.

\begin{introtheorem}[{\hyperref[cor:pureKeyKostka]
  {see Corollary~\ref*{cor:pureKeyKostka}}}]
  Let \(\alpha,\nu\) be weak compositions of the same total degree.
  Then the map
\[
N\longmapsto
[\xvec^{N\nu}]\keypoly_{N\alpha}(\xvec)
\]
is a polynomial in \(N\).
\end{introtheorem}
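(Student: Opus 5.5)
The plan is to write \(\keypoly_{N\alpha}\) as a fixed chain of Demazure operators applied to a dilated dominant monomial. Then the coefficient becomes a signed sum of vector partition functions evaluated along the ray \(N(\nu-\lambda)\), and I would use unimodularity of the type~\(A\) root system to upgrade "quasipolynomial" to "polynomial".

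\emph{Setup.} Let \(\lambda\) be the decreasing rearrangement of \(\alpha\), and let \(w\) be the minimal-length permutation with \(w\lambda=\alpha\). Then \(\keypoly_\alpha=\pi_{i_1}\dotsm\pi_{i_r}\xvec^{\lambda}\) for a reduced word \(i_1\dotsm i_r\) of \(w\). The key observation is that \(N\alpha\) sorts to \(N\lambda\) with the \emph{same} minimal permutation \(w\). Hence
\[
\keypoly_{N\alpha}=\pi_{i_1}\dotsm\pi_{i_r}\,\xvec^{N\lambda},
\]
with the operator word independent of \(N\).

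\emph{Expanding the operators.} On a monomial, \(\pi_i\xvec^{a}=\sum_{k}\xvec^{a-k(e_i-e_{i+1})}\), where:
\begin{itemize}
\item if \(a_i\ge a_{i+1}\), the sum runs over \(0\le k\le a_i-a_{i+1}\);
\item if \(a_i<a_{i+1}\), it is \(-1\) times the sum over \(a_i-a_{i+1}<k<0\).
\end{itemize}
Iterating over the fixed word gives \([\xvec^{N\nu}]\keypoly_{N\alpha}\) as a signed sum over sequences \((k_1,\dotsc,k_r)\in\setZ^r\). Each sequence satisfies the linear equation \(\sum_j k_j(e_{i_j}-e_{i_j+1})=N(\lambda-\nu)\). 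Its inequalities have the form \(\ell(k)\le N c\) or \(\ell(k)<Nc\), with \(\ell\) linear and \(c\) fixed.

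Splitting according to which sign case occurs at each step (finitely many sign patterns \(\sigma\in\{\pm\}^r\)), the coefficient becomes \(\sum_\sigma \pm\,\#\bigl(P_\sigma(N)\cap\setZ^r\bigr)\). Here each \(P_\sigma(N)\) is a (half-open) polyhedron whose right-hand side is \(N\) times a fixed vector, shifted by fixed integer offsets coming from the strict inequalities.

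\emph{Polynomiality.} Each count is a vector partition function type quantity: lattice points of \(\{k: Ak=Nb,\ Bk\le Nd+e\}\). The constraint matrix is built from the vectors \(e_i-e_{i+1}\) and from the differences \(a_i-a_{i+1}\), which are themselves linear in \(k\) with coefficients coming from Cartan-type pairings. I would verify that the combined matrix is totally unimodular, or at least that the relevant chamber decomposition is unimodular. This would rely on the fact that type~\(A\) root configurations are unimodular (the Dahmen--Micchelli/Sturmfels setting). It then follows that each piece is a polynomial on each closed chamber of its right-hand-side parameter space. Since the parameter \((Nb,Nd+e)\) moves along an affine ray, for \(N\gg0\) it stays in one chamber, and each summand is eventually polynomial.

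\emph{Main obstacle.} The hardest step is getting exact polynomiality for \emph{all} \(N\ge1\), and controlling the cancelling negative terms together with the non-homogeneous offsets \(e\), which break pure dilation. I expect the clean fix to be a positive rule in which every \(k_j\)-range is nonnegative and the polytope is an honest dilation \(N\cdot Q\). Such a rule comes from the fact that the Demazure character is a sum over lattice points of a union of faces of the Gelfand--Tsetlin polytope of \(\lambda\) (Postnikov--Stanley, Kiritchenko). Intersecting with the weight condition for \(N\nu\) and using that face structures commute with dilation, the count is an Ehrhart-type function of a rational polytope union. Its polynomiality (period one) would follow from unimodularity of the defining type~\(A\) inequalities, via the vector partition function argument above applied inside a single chamber containing the whole ray \(\{N(\lambda,\nu)\}\). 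That the whole ray lies in one closed chamber holds because chambers are cones.
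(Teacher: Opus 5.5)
Your setup matches the paper: the reduced word of $w$ is independent of $N$, and $\keypoly_{N\alpha}=\pi_{i_1}\dotsm\pi_{i_r}\xvec^{N\lambda}$. After that, both of your polynomiality steps have a gap.

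\textbf{Eventual polynomiality.} Your sign-split expansion into finite geometric sums produces bounds whose coefficients are Cartan integers. For the word $(1,2,1)$, the bound at the last-applied step is $N(\lambda_1-\lambda_2)+k_2-2k_3$. So the matrix whose total unimodularity you plan to ``verify'' is not totally unimodular as written, and you give no substitute. Parametric lattice-point counting then yields only eventual quasipolynomiality. The missing idea is to expand $\pi_i$ as a rational function, $\pi_i f=(f-\xvec^{-\alpha_i}s_i f)/(1-\xvec^{-\alpha_i})$, and to flip any negative denominator roots. Every term then has the form $C\xvec^{N\beta+\gamma}\prod_{\rho\in\Phi}(1-\xvec^{-\rho})^{-1}$ with $\Phi$ a multiset of positive roots. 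Its coefficient is a vector partition function whose matrix is a directed incidence matrix, hence totally unimodular. Sturmfels' chamber theorem then gives eventual polynomiality along the affine ray $N(\beta-\nu)+\gamma$. This is Lemma~2.1 of the paper.

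\textbf{Polynomiality for all $N$.} Your final step fails because the fiber of a Gelfand--Tsetlin face over the weight $N\nu$ is not cut out by a unimodular system. To see this, take $\alpha$ increasing, so $\keypoly_{N\alpha}=s_{N\lambda}$ and the face is all of $\mathrm{GT}(\lambda)$. Your argument would then prove polynomiality of stretched Kostka numbers by unimodularity alone. But fixed-weight Gelfand--Tsetlin polytopes can have non-integral vertices (De Loera--McAllister), so Ehrhart theory gives only a quasipolynomial there. The description that genuinely is unimodular has fixed offsets $\gamma$, coming from the factors $\xvec^{-\alpha_i}$ and from flipping roots. Its rays are therefore affine, and the argument that ``chambers are cones'' does not apply to it either.

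You already have both ingredients; they need to be combined the way the paper does. The Kogan-face model gives an Ehrhart quasipolynomial valid for all $N\geq0$, and your observation that faces commute with dilation is correct for this. The partition-function argument shows this quasipolynomial agrees with a polynomial $P$ for $N\gg0$. Each constituent of the quasipolynomial is then a polynomial equal to $P$ at infinitely many $N$, so it equals $P$ identically.
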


The key--Kostka theorem settles the polynomiality part of a
conjecture from earlier work with E.~Alhajjar
\cite[Conj.~5.1]{AlexanderssonAlhajjar2018}.  The conjectured nonnegativity of
the resulting polynomial remains open.  Such positivity is unknown even for
classical stretched Littlewood--Richardson and Kostka polynomials
\cite{KingTolluToumazet2004}.  Another recent approach to stretched Kostka
coefficients, using partition-division maps on symmetric functions, is given
in~\cite{AlexanderssonDai2026x}.

The Schubert analogue is eventually polynomial along every affine weight ray.

\begin{introtheorem}[{\hyperref[thm:affineSchubertKostka]
  {see Theorem~\ref*{thm:affineSchubertKostka}}}]
Fix a permutation \(u\) and an integer \(h\) such that
\(\code(u)\in\setN^h\).  Let \(\nu,\eta\in\setZ^h\), and suppose that
\(N\nu+\eta\) is a weak composition and has total degree \(N\ell(u)\) for all
sufficiently large \(N\).  Then
\[
N\longmapsto
[\xvec^{N\nu+\eta}]\schubertS_{\stretchperm{N}{u}}(\xvec)
\]
is eventually polynomial.
\end{introtheorem}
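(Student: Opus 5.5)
We prove the statement via Magyar's orthodontic formula, which writes a Schubert polynomial as a sequence of Demazure operators applied to a product of powers of minors. Combined with the description of Demazure operators on monomials, the target coefficient becomes a signed count of lattice points in a polytope, and we then invoke the theory of vector partition functions.

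The first step is to fix the shape of the orthodontic data. Let \(w_N=\stretchperm{N}{u}\). Magyar's formula gives
\[
\schubertS_{w}=\pi_{i_1}\bigl(x^{(1)}\bigr)^{m_1}\pi_{i_2}\dotsm\pi_{i_\ell}\bigl(x^{(\ell)}\bigr)^{m_\ell}
\]
with data \((i_1,\dots,i_\ell;m_1,\dots,m_\ell)\) read off the Rothe diagram of \(w\), where \(x^{(j)}=x_1\dotsm x_j\). We would show that scaling the code by \(N\) leaves the bubble-sort word \(i_1\dots i_\ell\) unchanged for all \(N\), up to the trailing fixed points, which do not affect polynomials in \(x_1,\dots,x_h\). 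It also scales the multiplicities linearly, \(m_j=Nm_j^{(u)}\). The reason is that the orthodontic algorithm depends only on the relative order pattern of diagram columns. Stretching the code replicates columns of the Rothe diagram, or equivalently multiplies the column multiplicities by \(N\), while keeping the sequence of distinct column types. If this exact statement fails for small \(N\), we only need it for \(N\gg0\), which is all the theorem requires.

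The second step is to expand the operator word. Each isobaric divided difference \(\pi_i\), applied to a monomial \(\xvec^a\), gives a signed sum over a segment of exponents: \(\pi_i\xvec^a=\sum_{k}\xvec^{b}\), with \(b\) running along the line between \(a\) and \(s_ia\), and with the negative convention when \(a_i<a_{i+1}\). Iterating, the coefficient \([\xvec^{N\nu+\eta}]\schubertS_{w_N}\) becomes a signed sum, over finitely many sign patterns \(\sigma\), of the number of integer points \(t\in\setZ^d\) satisfying linear inequalities \(A_\sigma t\le Nb_\sigma+c_\sigma\). Here the right-hand sides depend affinely on \(N\), through \(Nm_j\), \(N\nu\) and \(\eta\). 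To keep the sign cases finite, we would follow the Demazure-operator section and work with chambers: the sign of each intermediate exponent difference is determined by which inequalities hold, and this gives a polyhedral subdivision.

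The main obstacle is the third step. Each such count is a vector partition function evaluated at \(Nb+c\), and by Sturmfels/Brion--Vergne it is piecewise quasipolynomial on chambers. Along the affine ray \(Nb+c\) the argument eventually stays in a single chamber, so the count is eventually quasipolynomial. The subtle point is removing the period. For that we would show that the constraint matrices are totally unimodular, or at least that the relevant chambers are unimodular. Demazure-type inequalities are interlacing (Gelfand--Tsetlin-like) differences \(t_k-t_{k'}\), so they form network matrices, and their vector partition functions are genuinely polynomial on chambers. If total unimodularity is unavailable, the fallback is to compare with the pure key case, Corollary~\ref{cor:pureKeyKostka}, which is already polynomial, and to use an Ehrhart-type argument with integral vertices for polytopes whose vertices are integral in the parameter. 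Eventual polynomiality then follows from summing finitely many eventually polynomial terms.
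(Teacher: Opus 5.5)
Your overall plan (Magyar's formula, a fixed Demazure word, then vector partition functions plus unimodularity) matches the paper's. But the step that carries the theorem, removing the period, is not established. Eventual quasipolynomiality already follows from general parametric lattice-point counting, so the content of the statement is that the period is one. Your claim that the constraints are interlacing differences \(t_k-t_{k'}\) is false for the monomial-segment expansion in its natural coordinates. Take \(\pi_1\pi_2\pi_1\xvec^{(a,b,c)}\) with \(a\ge b\ge c\):
\begin{itemize}
\item the first operator produces \((t,a+b-t,c)\);
\item the second imposes \(c\le s\le a+b-t\);
\item the positive sign region of the third is \(t\ge s\).
\end{itemize}
The rows \((1,1)\) and \((-1,1)\) in the variables \((t,s)\) give a minor of determinant \(2\). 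Individual sign regions may be genuinely quasipolynomial even when the signed total is polynomial, so nothing follows without an actual unimodularity argument, and you give none. The fallback also fails. Transferring polynomiality from key polynomials would require the key expansion of \(\schubertS_{\stretchperm{N}{u}}\) to have eventually fixed coefficients and affine indices. That is exactly the open Conjecture~\ref{conj:stableAffineKeyExpansion}, and the key corollary covers only pure rays anyway. The missing idea is to expand \(\pi_i\) as a rational function rather than on monomials: \(\pi_i f=(f-\xvec^{-\alpha_i}s_i f)/(1-\xvec^{-\alpha_i})\). Iterating a fixed word gives finitely many terms \(C\xvec^{N\beta+\gamma}\prod_{\rho\in\Phi}(1-\xvec^{-\rho})^{-1}\), where \(\Phi\) is a multiset of roots that can all be made positive. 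Each coefficient is then a vector partition function for a matrix with columns \(e_i-e_j\) (\(i<j\)). Such a matrix is a submatrix of a directed-graph incidence matrix, hence totally unimodular. Sturmfels' theorem then gives polynomiality on each relatively open face of the chamber complex, and the affine ray eventually stays in one face. No sign-region subdivision is needed.

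Your first step is also false as stated, and not only for small \(N\). Code stretching does not replicate the columns of \(D(u)\), and the multiplicities are affine, not linear. For \(u=321654\), \(D(u)\) has column types \(\{1,2\},\{1\},\{4,5\},\{4\}\). For \(N\ge3\), \(D(\stretchperm{N}{u})\) has types \(\{1,2\},\{1,2,4,5\}^{N-1},\{1\},\{1,4,5\},\{1,4\}^{N-2},\{4\}^2\), with orthodontic data \(\mathbf i=(3,4,2,3,1)\) and \(m=(0,N-1,N-2,1,2)\). Your later steps can absorb affine data, but you must prove that shape:
\begin{itemize}
\item Show by induction along the inverse Lehmer code that \(w(N)(1),\dotsc,w(N)(h)\) are eventually affine in \(N\) with a fixed strict relative order. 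Then every column type occurs with multiplicity \(a_CN+b_C\).
\item A nonnegative integer-valued affine function is eventually either identically zero or positive. Hence the support at each stage of Magyar's algorithm is eventually fixed, so the lexicographic and smallest-tooth choices, and therefore the word, are eventually constant, and all \(k_r,m_j\) are affine.
\end{itemize}
A minor slip as well: the monomial following \(\pi_{i_j}\) is \((x_1\dotsm x_{i_j})^{m_j}\), not \((x_1\dotsm x_j)^{m_j}\), and the formula carries a prefactor \(\prod_r(x_1\dotsm x_r)^{k_r}\).
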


Schubert duality then gives the main theorem.

\begin{introtheorem}[{\hyperref[cor:stretchedSchubertStructureConstants]
  {see Corollary~\ref*{cor:stretchedSchubertStructureConstants}}}]
For fixed permutations \(u,v,w\), the stretched Schubert structure constant
\[
N\longmapsto
c_{\stretchperm{N}{u},\stretchperm{N}{v}}^{\stretchperm{N}{w}}
\]
is eventually polynomial.
\end{introtheorem}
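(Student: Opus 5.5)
The statement to prove is the third introductory theorem: for fixed \(u,v,w\), the map \(N\mapsto c_{\stretchperm{N}{u},\stretchperm{N}{v}}^{\stretchperm{N}{w}}\) is eventually polynomial. The plan is to reduce it to the affine Schubert--Kostka theorem, extended to finite products, via Watanabe's Schubert duality. Duality expresses \(c_{u,v}^w\) as a monomial coefficient: there is an explicit permutation \(\tilde w\), built from \(w\) by a shuffle or shift of indices, and an explicit exponent vector \(\rho\) (a staircase-type vector determined by \(w\) and the sizes involved) such that
\[
c_{u,v}^w=[\xvec^{\rho}]\,\schubertS_u(\xvec)\,\schubertS_v(\xvec')\,\schubertS_{\tilde w}(\xvec'').
\]
Here \(\xvec,\xvec',\xvec''\) are suitably interleaved or shifted variable sets.

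The first step is to write this identity down concretely for the triple \((\stretchperm{N}{u},\stretchperm{N}{v},\stretchperm{N}{w})\), taking \(h\) large enough that all three codes, with their padded zeros, lie in \(\setN^h\). The key bookkeeping check is that everything entering the duality is affine in \(N\). The code of \(\widetilde{\stretchperm{N}{w}}\) should equal \(N\cdot(\text{something fixed})+(\text{something fixed})\), obtained from \(N\code(w)\) by complementing against a staircase whose size grows linearly in \(N\). Likewise the exponent \(\rho_N\) should have the form \(N\nu+\eta\). If the staircase size itself grows with \(N\), I would instead use a version of the duality in which the ambient rank is fixed and only the codes scale. Failing that, I would reparametrize by padding with trailing zeros so that each coordinate is affine in \(N\) on a fixed index set.

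The second step invokes the product version of the affine Schubert--Kostka theorem. The abstract says the coefficient results extend to finite products, so for permutations whose codes are affine in \(N\), the coefficient of \(\xvec^{N\nu+\eta}\) in such a product is eventually polynomial. The underlying reason is the Magyar orthodontic formula: it writes each Schubert polynomial as a composition of Demazure operators applied to a monomial. A monomial coefficient then becomes a signed or positive count of lattice points in a vector partition function whose parameters are affine in \(N\). Along a ray, such a count is eventually a polynomial on a single chamber. Expanding the product gives a convolution of such counts, which is again a vector partition function, so the same conclusion holds.

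The main obstacle is that the complementary permutation \(\tilde w\) need not be a stretch \(\stretchperm{N}{w'}\) of a fixed permutation; its code is only affine, of the form \(N c+d\). I would handle this by proving the affine Schubert--Kostka theorem in the generality of codes \(Nc+d\). The orthodontic word for such a code has a fixed letter pattern with multiplicities affine in \(N\), so the vector-partition argument goes through unchanged. Finally, eventual quasipolynomiality of period one follows because the chamber is eventually constant along the ray and the lattice is unimodular, which is the property of type \(A\) root systems used in the Kostka case.
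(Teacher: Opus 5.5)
Your overall architecture matches the paper's. Duality reduces the structure constant to monomial coefficients of a product of Schubert polynomials with affine codes. Those coefficients are then handled by Magyar's orthodontic formula with eventually stable data (Lemma~\ref{lem:stableOrthodonticData}) and type~\(A\) vector partition functions. The gap is in your first step. The identity you posit is a single coefficient \([\xvec^{\rho}]\schubertS_u(\xvec)\schubertS_v(\xvec')\schubertS_{\tilde w}(\xvec'')\) over interleaved variable sets, with a complementary permutation \(\tilde w\). That is not the form Schubert duality takes. You also leave unresolved the one point everything hinges on: whether the number of variables and the number of terms stay fixed as \(N\) grows. Lemma~\ref{lem:fixedDemazureExpressions} needs a fixed \(h\). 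A signed sum of eventually polynomial terms is guaranteed to be eventually polynomial only when the number of terms does not depend on \(N\). The classical extraction via the top divided difference \(\partial_{w_0}\) in \(\symS_n\) fails both requirements. The reason is that \(\stretchperm{N}{w}\in\symS_{n(N)}\) with \(n(N)\to\infty\), so the staircase, the number of variables, and the \(n(N)!\) terms of the antisymmetrization all grow with \(N\).

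The missing ingredient is Watanabe's fixed-rank duality \eqref{eq:watanabeDuality}. There the dual index \(\rho-\mu\) may have negative entries, so the fixed staircase \(\rho=(h-1,\dotsc,0)\) need not dominate \(\mu\). Take \(h\) bounding the supports of the three codes and \(\mu=N\code(w)\). Expanding \(\Delta\) gives a signed sum of \(h!\) coefficients, all in the same variables \(x_1,\dotsc,x_h\):
\[
c_{\stretchperm{N}{u},\stretchperm{N}{v}}^{\stretchperm{N}{w}}
=\sum_{\sigma\in\symS_h}\operatorname{sgn}(\sigma)\,
[\xvec^{\sigma\rho+q(N)\mathbf 1}]\,
\schubertS_{\stretchperm{N}{u}}(\xvec)\,\schubertS_{\stretchperm{N}{v}}(\xvec)\,
\schubertS_{\rho-N\code(w)+q(N)\mathbf 1}(\xvec).
\]
Here \(q(N)=MN+K\), with \(M>\max_i\code(w)_i\) and \(K\) large, turns the generalized Laurent Schubert polynomial \(\schubertS_{\rho-N\code(w)}\) into an honest one. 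All three codes and all extracted exponents are affine in \(N\), and the number of terms is fixed. Your second and third steps then apply termwise, exactly as in the paper.

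The dual factor is indexed by the affine code \(\rho-N\code(w)+q(N)\mathbf 1\), not by a stretch. This is why you rightly need the affine Schubert--Kostka theorem for codes \(Nc+d\). That theorem rests on the Rothe-diagram fact that column-type multiplicities of \(D(w(N))\) are affine in \(N\) (Proposition~\ref{prop:affineColumnMultiplicities}). Your proposal asserts this rather than proves it.
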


The two coefficient theorems reduce coefficient extraction from fixed Demazure
expressions to type~\(A\) vector partition functions.  For Schubert
polynomials, P.~Magyar's orthodontic formula supplies such an expression once
the affine column multiplicities have stabilized
\cite[Prop.~15]{Magyar1998SchubertBottSamelson}.  M.~Watanabe's duality formula
then expresses each Schubert structure constant as a fixed alternating sum of
monomial coefficients~\cite[Lem.~7.4]{Watanabe2016}.

The paper is organized as follows.  Section~\ref{sec:dividedDifferenceLemma}
gives the vector-partition-function lemma used throughout.  We apply it to key
polynomials in Section~\ref{sec:keyKostka}.  Sections
\ref{sec:stretchingRotheDiagrams} and~\ref{sec:orthodontia} establish the
affine orthodontic description of stretched Rothe diagrams and prove affine
Schubert--Kostka polynomiality.  Finally, Section~\ref{sec:products} treats
products and Schubert structure constants.

\section{A divided-difference lemma}
\label{sec:dividedDifferenceLemma}

We write
\(\defin{s_i}\) for the transposition of \(x_i\) and \(x_{i+1}\).  Let
\[
\defin{\pi_i(f)}
\coloneqq
\frac{x_i f-x_{i+1}s_i(f)}{x_i-x_{i+1}}.
\]
The operator \(\pi_i\) is the \defin{isobaric divided difference}, or
\defin{Demazure operator}.

The next lemma is the common polynomiality mechanism for keys and Schubert
polynomials.

\begin{lemma}[Fixed Demazure expressions]
\label{lem:fixedDemazureExpressions}
Fix \(h\).  Suppose that, for all sufficiently large \(N\), the Laurent
polynomial \(F_N\) is obtained from \(1\) by a fixed finite sequence of the
following operations:
\begin{enumerate}
\item multiply by a Laurent monomial
  \(\xvec^{Na+b}\), where \(a,b\in\setZ^h\) are fixed;
\item apply a fixed Demazure operator \(\pi_i\).
\end{enumerate}
Then, for fixed \(\nu,\eta\in\setZ^h\), the map
\[
N\longmapsto
[\xvec^{N\nu+\eta}]F_N
\]
is eventually polynomial in \(N\).  The same conclusion holds for a product
of finitely many such expressions.
\end{lemma}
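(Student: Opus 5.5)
Write the expression as \(F_N=\pi_{i_k}\,\xvec^{Na_k+b_k}\,\pi_{i_{k-1}}\dotsm\pi_{i_1}\,\xvec^{Na_1+b_1}\cdot 1\). Merging consecutive monomial multiplications and inserting trivial factors \(\xvec^{0}\), we may assume the operations alternate. The plan is to expand each Demazure operator on monomials explicitly. Then \([\xvec^{N\nu+\eta}]F_N\) becomes a signed count of lattice points in a polytope whose defining data depend affinely on \(N\). Such a count is a vector partition function evaluated at an affine function of \(N\), and it will turn out to be polynomial along the ray.

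The basic formula is that, on a monomial, \(\pi_i(\xvec^c)\) with \(m=c_i-c_{i+1}\) equals \(\sum_{j=0}^{m}\xvec^{c-j(e_i-e_{i+1})}\) if \(m\ge 0\). It equals \(0\) if \(m=-1\), and \(-\sum_{j=1}^{-m-1}\xvec^{c+j(e_i-e_{i+1})}\) if \(m\le -2\). Each application of \(\pi_{i_r}\) therefore introduces one integer variable \(j_r\) and shifts the exponent by \(\pm j_r\,(e_{i_r}-e_{i_r+1})\). The range of \(j_r\) is determined by a sign condition and a linear inequality in the current exponent. That exponent is an affine function of \(N\) and of \(j_1,\dots,j_{r-1}\).

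Now fix a sign pattern \(\epsilon\in\{+,-\}^k\), recording which case of the formula is used at each step. The contribution of \(\epsilon\) is \(\pm\) the number of integer vectors \((j_1,\dots,j_k)\) satisfying three kinds of conditions:
\begin{itemize}
\item finitely many linear inequalities whose right-hand sides are affine in \(N\);
\item the equation \(\sum_r a_r N+\sum_r b_r \pm\sum_r j_r(e_{i_r}-e_{i_r+1})=N\nu+\eta\);
\item the case conditions \(m_r\ge0\) or \(m_r\le-2\), again affine in \(N\).
\end{itemize}
Introduce nonnegative slack variables for every inequality. Each summand then becomes the number of \(y\in\setN^M\) with \(Ay=Nc+d\), for a fixed integer matrix \(A\) and fixed vectors \(c,d\), that is, a vector partition function \(\phi_A(Nc+d)\).

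By Sturmfels' theorem, \(\phi_A\) is piecewise quasipolynomial on the chambers of \(A\). For \(N\gg0\), the point \(Nc+d\) stays in the closure of one chamber, namely the one containing \(c\) up to boundary effects. This shows eventual quasipolynomiality of the sum.

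The main obstacle is upgrading quasipolynomial to polynomial. There are two routes, and I would try the first.
\begin{enumerate}
\item Show that each \(A\) is unimodular, so that \(\phi_A\) is a genuine piecewise polynomial. The columns of \(A\) are type \(A\) roots \(e_i-e_{i+1}\) (up to sign) together with slack unit vectors, and the constraint system has an interval-like structure, since each \(j_r\) is bounded by differences of adjacent coordinates. I would aim to show that the constraint matrix is totally unimodular, for instance as a network matrix after a suitable change of variables.
\item Alternatively, argue directly by induction on \(k\). Suppose the coefficient of \(\xvec^{Np+q}\) in the intermediate expression is eventually polynomial in \((N,p,q)\) on finitely many affine pieces. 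Summing over \(j\) ranging between affine bounds in \(N\) (the discrete sums \(\sum_{j=0}^{m}\)) preserves polynomiality, because the summation bounds have integer slopes and there is no division.
\end{enumerate}
Route 2 is cleaner. There the delicate point is that the support regions stabilize for large \(N\): comparisons of affine functions of \(N\) are eventually constant. This guarantees a single polynomial for all large \(N\).

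For products, \(F_N G_N\) is handled the same way. Its coefficient is the sum over \(\mu\) of \([\xvec^{\mu}]F_N\cdot[\xvec^{N\nu+\eta-\mu}]G_N\). Equivalently, it is again a signed lattice-point count with the variables concatenated, so the same argument applies.
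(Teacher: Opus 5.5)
There is a genuine gap, and it is at the step you yourself flag as the main obstacle. Your reduction is sound up to eventual \emph{quasi}polynomiality, but that is not the content of the lemma. The whole point, and what the Pak--Slonim conjecture asks, is that the period is one, and neither route establishes it.

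In Route~1 the constraint matrix is in general not totally unimodular in your coordinates. The bound at step $r$ is $m_r$, the difference of the $i_r$-th and $(i_r+1)$-st entries of the current exponent. An earlier shift by $\pm j_s\alpha_{i_s}$ with $i_s=i_r$ changes that difference by $\pm 2j_s$. Already for the word $(1,2,1)$ with all cases positive, you get the row $2j_1-j_2+j_3\le\text{const}$. You give no change of variables that repairs this, and the weaker unimodularity that Sturmfels' theorem actually needs is left unproved.

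In Route~2, the claim that ``the summation bounds have integer slopes and there is no division'' is exactly what has to be shown. To get $[\xvec^{\mu'}]\pi_iG_N$ you sum $[\xvec^{\mu'+j\alpha_i}]G_N$ over $j$ in a range of length of order $N$. So you need $(N,\mu)\mapsto[\xvec^\mu]G_N$ to be piecewise polynomial uniformly in $\mu$, not merely eventually polynomial along each fixed ray; ``comparisons of affine functions of $N$ are eventually constant'' does not give this. The breakpoints of the new sum lie where the line $\mu'+j\alpha_i$ crosses a wall $\ell(N,\mu)=c$ of the old pieces, namely at $j=(c-\ell(N,\mu'))/\ell(0,\alpha_i)$. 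Unless you state and propagate an invariant forcing $\ell(0,\alpha_i)\in\{0,\pm1\}$ for every wall and every $i$, floors of fractions appear and the induction only yields quasipolynomials. Such an invariant is not automatic: the truncation $j\ge\max(0,-m')$ in the inverted Demazure sum can create walls $\mu_i-\mu_{i+1}=c$, and the form $\mu_i-\mu_{i+1}$ takes the value $2$ on $\alpha_i$.

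The missing idea is not to truncate at all. The paper uses the case-free identity $\pi_i f=(f-\xvec^{-\alpha_i}s_if)/(1-\xvec^{-\alpha_i})$; your finite sum $\sum_{j=0}^m\xvec^{c-j\alpha_i}$ is precisely the difference of the two geometric series this identity produces. Iterating through the fixed word gives a fixed signed sum of $2^k$ terms $C\xvec^{N\beta+\gamma}\prod_{\rho\in\Phi}(1-\xvec^{-\rho})^{-1}$, and after flipping negative roots every $\rho$ is a positive type~$A$ root. Expand each factor as a geometric series in one common ring, with supports in translates of the negative root cone. The coefficient of $\xvec^{N\nu+\eta}$ in each term is then $C$ times the vector partition function of the matrix with columns $\rho\in\Phi$, evaluated at $N(\beta-\nu)+(\gamma-\eta)$. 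That matrix is part of a directed incidence matrix, hence totally unimodular. So Sturmfels' theorem gives genuine polynomials on the cells of the chamber complex, and the affine ray eventually stays in one cell. Keeping the two cones separate removes both your sign-pattern case split and the dependence of later bounds on earlier $j_s$, which is what produced the entries $\pm2$. Equivalently, the invariant your Route~2 needs is that each coefficient function is a signed sum of type~$A$ root partition functions, which is the paper's argument. Your treatment of products, by concatenating variables or root multisets, is fine once this is in place.
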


\begin{proof}
Let \(\defin{\alpha_i}\) be the \(i\)-th simple root of type~\(A\).  Thus
\(\alpha_i\) has a \(1\) in position \(i\), a \(-1\) in position \(i+1\),
and zeroes elsewhere; it is the difference of two consecutive unit vectors.
On Laurent series we have
\[
\pi_i(f)
=
\frac{f}{1-\xvec^{-\alpha_i}}
-
\frac{\xvec^{-\alpha_i}s_i(f)}{1-\xvec^{-\alpha_i}}.
\]
Starting with a Laurent monomial and iterating this identity gives a finite
sum of terms
\begin{equation}
\label{eq:rootDenominatorExpansion}
C\xvec^{N\beta+\gamma}
\prod_{\rho\in\Phi}
\frac{1}{1-\xvec^{-\rho}},
\end{equation}
where \(C\in\setZ\), the vectors \(\beta,\gamma\in\setZ^h\) are fixed, and
\(\Phi\) is a finite \emph{multiset} of type~\(A\) roots.  If a denominator
contains a negative root, we use
\[
\frac{1}{1-\xvec^\rho}
=-
\frac{\xvec^{-\rho}}{1-\xvec^{-\rho}}
\]
to make it positive.  Thus all denominator roots in
\eqref{eq:rootDenominatorExpansion} may be taken positive.

The coefficient of \(\xvec^{N\nu+\eta}\) in one such term is \(C\) times
\[
\#\left\{(m_\rho)_{\rho\in\Phi}\in\setN^\Phi:
  \sum_{\rho\in\Phi}m_\rho\rho
  =N(\beta-\nu)+(\gamma-\eta)\right\}.
\]
This is a vector partition function.
A matrix whose columns are positive type~\(A\) roots
is a submatrix of a directed incidence matrix and is therefore totally
unimodular.  Repeated columns preserve total unimodularity.  By Sturmfels'
chamber theorem~\cite{Sturmfels1995}, the corresponding vector partition
function is polynomial on every relatively open face of its chamber complex.
The chamber complex is finite, and the signs of its finitely many defining
linear forms on the affine ray \(N(\beta-\nu)+(\gamma-\eta)\) are eventually
constant.  Hence the ray eventually remains in one face, so each term, and
therefore their finite sum, is eventually polynomial.

Multiplying finitely many expressions merely concatenates the multisets of
roots in the denominators, so the same argument proves the product statement.
\end{proof}

\section{Key-Kostka coefficients}
\label{sec:keyKostka}

Key polynomials, introduced by A.~Lascoux and M.-P.~Sch\"utzenberger
\cite{Lascoux1990Keys}, are the type~\(A\) Demazure characters.  Combinatorial
models include S.~K.~Mason's semi-skyline augmented fillings, which connect
nonsymmetric Macdonald polynomials with Demazure atoms and operators
\cite{Mason2009}, and the Kohnert tableaux of S.~H.~Assaf and D.~Searles
\cite{AssafSearles2018,AssafSearles2019}.

A \defin{weak composition} is a finite sequence of nonnegative integers; we
append trailing zeroes whenever necessary.  We use the divided-difference
definition of the key polynomial \(\keypoly_\alpha\).  If \(\alpha\) is weakly
decreasing, then
\[
\defin{\keypoly_\alpha}\coloneqq\xvec^\alpha.
\]
If \(\alpha_i<\alpha_{i+1}\), then
\[
\defin{\keypoly_\alpha}\coloneqq\pi_i\keypoly_{s_i\alpha}.
\]
This recursion is well-founded: each such step decreases by one the number of
pairs \(p<q\) with \(\alpha_p<\alpha_q\).
For fixed \(h\), the family
\[
\{\keypoly_\alpha:\alpha\in\setN^h\}
\]
is a basis of \(\mathbb Q[x_1,\dotsc,x_h]\).
For a permutation \(w\) with a reduced decomposition
\(w=s_{i_1}\dotsm s_{i_\ell}\), define
\[
\defin{\pi_w}
\coloneqq
\pi_{i_1}\dotsm\pi_{i_\ell}.
\]
The Demazure relations imply that \(\pi_w\) is independent of the chosen
reduced decomposition.
Equivalently, if \(\lambda\) is the decreasing rearrangement of \(\alpha\)
and \(w\) is the shortest permutation satisfying \(w\lambda=\alpha\), then
\(\keypoly_\alpha=\pi_w\xvec^\lambda\).

\begin{theorem}[Affine key-Kostka polynomiality]
\label{thm:affineKeyKostka}
Fix \(\alpha,\nu\in\setN^h\) and \(\beta,\eta\in\setZ^h\).  Suppose that
\(N\alpha+\beta\) and
\(N\nu+\eta\) are weak compositions of the same total degree for all
sufficiently large \(N\).  Then
\[
N\longmapsto
[\xvec^{N\nu+\eta}]\keypoly_{N\alpha+\beta}(\xvec)
\]
is eventually polynomial.
\end{theorem}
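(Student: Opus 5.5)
The plan is to write \(\keypoly_{N\alpha+\beta}\) as a fixed Demazure expression applied to a monomial, and then apply Lemma~\ref{lem:fixedDemazureExpressions}. Set \(\gamma_N\coloneqq N\alpha+\beta\). Let \(\lambda_N\) be the decreasing rearrangement of \(\gamma_N\), and let \(w_N\) be the shortest permutation with \(w_N\lambda_N=\gamma_N\). Then
\[
\keypoly_{\gamma_N}=\pi_{w_N}\xvec^{\lambda_N}.
\]
It therefore suffices to show two things for all sufficiently large \(N\): that \(w_N\) is a fixed permutation \(w\), and that \(\lambda_N\) has the form \(Na+b\) with \(a,b\in\setZ^h\) fixed.

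\emph{Stabilization of the sorting data.} Compare entries \(p<q\) of \(\gamma_N\). Each difference \(\gamma_{N,p}-\gamma_{N,q}=N(\alpha_p-\alpha_q)+(\beta_p-\beta_q)\) is affine in \(N\), so for \(N\gg0\) its sign is constant. It equals the sign of \(\alpha_p-\alpha_q\) if that is nonzero, and the sign of \(\beta_p-\beta_q\) otherwise. Hence the relative order of all entries, including ties, is eventually constant. The shortest sorting permutation \(w_N\) depends only on this order pattern (ties are resolved by stability, i.e.\ by minimality), so \(w_N=w\) is eventually fixed.

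The sorted vector is then \(\lambda_N=w^{-1}\gamma_N=N(w^{-1}\alpha)+w^{-1}\beta\). This is affine in \(N\) with fixed integer vectors \(a=w^{-1}\alpha\) and \(b=w^{-1}\beta\).

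\emph{Applying the lemma.} Fix a reduced word \(w=s_{i_1}\dotsm s_{i_\ell}\). For \(N\gg0\), \(F_N\coloneqq\keypoly_{N\alpha+\beta}\) is obtained from \(1\) by a fixed sequence of operations:
\begin{enumerate}
\item multiply by \(\xvec^{Na+b}\);
\item apply \(\pi_{i_\ell},\dotsc,\pi_{i_1}\) in turn.
\end{enumerate}
Lemma~\ref{lem:fixedDemazureExpressions} then yields eventual polynomiality of \(N\mapsto[\xvec^{N\nu+\eta}]F_N\). The equal-degree hypothesis is not needed for the argument; it only excludes trivially zero cases. Note that \(\nu\in\setN^h\) is also not needed beyond \(\nu\in\setZ^h\).

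\emph{Main obstacle.} The only delicate step is justifying that \(w_N\) is eventually constant when \(\alpha\) has repeated entries, since then ties in \(N\alpha\) are broken by \(\beta\). If \(\beta_p=\beta_q\) as well, the entries remain tied forever. In that case I would check directly that the minimal-length sorting permutation is determined by the weak order pattern of \(\gamma_N\), i.e.\ by the set of pairs with \(\gamma_{N,p}<\gamma_{N,q}\) and the set with \(\gamma_{N,p}=\gamma_{N,q}\). Its inversions are exactly the pairs with \(\gamma_{N,p}<\gamma_{N,q}\), which is consistent with the well-foundedness count in the recursive definition. This pattern is eventually constant, as argued above.
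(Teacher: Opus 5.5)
Your proposal is correct and follows essentially the same route as the paper. Both arguments note that the order pattern of \(N\alpha+\beta\) eventually stabilizes, so the minimal sorting permutation \(w\) is fixed and the sorted vector is affine in \(N\); then \(\keypoly_{N\alpha+\beta}=\pi_w\xvec^{N\lambda+\lambda_0}\) is a fixed Demazure expression, and Lemma~\ref{lem:fixedDemazureExpressions} applies. Your explicit handling of ties, including entries that remain tied forever, spells out what the paper states in one line.
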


\begin{proof}
For \(N\gg0\), the relative order of the parts of \(N\alpha+\beta\) is fixed.
Its decreasing rearrangement therefore has the form \(N\lambda+\lambda_0\),
where \(\lambda,\lambda_0\) are fixed, and the shortest permutation \(w\) satisfying
\[
w(N\lambda+\lambda_0)=N\alpha+\beta
\]
is fixed.  Choose a reduced word for \(w\).  The expression
\[
\keypoly_{N\alpha+\beta}
=\pi_w\xvec^{N\lambda+\lambda_0}
\]
now satisfies Lemma~\ref{lem:fixedDemazureExpressions}, which proves the
claim.
\end{proof}

\begin{corollary}
\label{cor:pureKeyKostka}
For weak compositions \(\alpha,\nu\) of the same total degree,
\[
N\longmapsto
[\xvec^{N\nu}]\keypoly_{N\alpha}(\xvec),
\qquad N\geq0,
\]
is a polynomial in \(N\).
\end{corollary}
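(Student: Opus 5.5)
We want the corollary for all \(N\geq 0\), not just eventually. The plan is to take the special case \(\beta=\eta=0\) of Theorem~\ref{thm:affineKeyKostka}, and then to upgrade "eventually polynomial" to "polynomial for every \(N\ge0\)". The upgrade comes from revisiting the proof of Lemma~\ref{lem:fixedDemazureExpressions} and noting that in the pure dilation case no stabilization threshold is needed.

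\textbf{Step 1: a single Demazure expression for every \(N\geq0\).} Let \(\lambda\) be the decreasing rearrangement of \(\alpha\), and let \(w\) be the shortest permutation with \(w\lambda=\alpha\). For every \(N\geq1\), the decreasing rearrangement of \(N\alpha\) is \(N\lambda\). Since ties and strict inequalities among the parts of \(N\alpha\) are exactly those among the parts of \(\alpha\), the same \(w\) is shortest, so \(\keypoly_{N\alpha}=\pi_w\xvec^{N\lambda}\). For \(N=0\) both sides equal \(1\), because \(\pi_i1=1\). Thus the hypothesis of Lemma~\ref{lem:fixedDemazureExpressions} holds for all \(N\geq0\), with \(b=0\) and no lower bound on \(N\).

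\textbf{Step 2: expand into vector partition functions.} As in the proof of the lemma, \(\pi_w\xvec^{N\lambda}\) becomes a finite signed sum of terms \(C\xvec^{N\beta'+\gamma}\prod_{\rho\in\Phi}(1-\xvec^{-\rho})^{-1}\). Here \(\beta'\) ranges over \(W\)-translates of \(\lambda\), and \(\gamma\) is a sum of roots coming from the sign flips. The \([\xvec^{N\nu}]\)-coefficient is then \(\sum C\,p_\Phi\bigl(N(\beta'-\nu)+\gamma\bigr)\), where \(p_\Phi\) is the vector partition function of \(\Phi\).

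\textbf{Step 3 (main obstacle): remove the threshold.} Two things threaten exact polynomiality. First, the shifts \(\gamma\) move individual terms off the chamber ray. Second, even with \(\gamma=0\) the ray \(N(\beta'-\nu)\) could lie in different faces for \(N=0\) and \(N>0\). I would avoid tracking the expansion term by term and instead argue globally. The coefficient \([\xvec^{N\nu}]\keypoly_{N\alpha}\) counts lattice points of \(N\) times a fixed polytope. This follows from a Gelfand--Tsetlin-type realization of Demazure characters, or from Kohnert/semi-skyline models: these are integer points of a polytope cut by linear inequalities whose right-hand sides scale with \((\alpha,\nu)\). Totally unimodular, integral data makes the polytope integral. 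Then the Ehrhart polynomial agrees with the count for all \(N\geq0\), with value \(1\) at \(N=0\). Within the paper's framework, the alternative is to show that every term in Step 2 uses a vector \(N(\beta'-\nu)+\gamma\) whose face stays the same for all \(N\ge0\). Sturmfels' theorem gives polynomiality on closed chambers for unimodular systems, i.e.\ the polynomial pieces extend continuously to closed cones. So if \(\gamma\) can be absorbed by shifting, then \(N=0\) lies in the closure of the face and the same polynomial applies. I expect this closure argument, or equivalently confirming integrality of the Demazure polytope, to be the only delicate point. Everything else follows directly from Theorem~\ref{thm:affineKeyKostka}.
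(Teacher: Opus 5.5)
There is a genuine gap in Step~3. Steps~1 and~2 only recover Theorem~\ref{thm:affineKeyKostka} with $\beta=\eta=0$, that is, polynomiality for $N\gg0$. Everything the corollary adds, polynomiality for \emph{every} $N\geq0$, rests on Step~3, and neither of your two routes there works as stated.

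\textbf{Route (a).} You claim the coefficient counts lattice points in $N$ times a polytope cut out by totally unimodular data, hence in an \emph{integral} polytope. Nothing you cite gives this, and in the standard model it is false. When $\alpha$ is weakly increasing, $\keypoly_{N\alpha}=s_{N\lambda}(x_1,\dotsc,x_h)$, so your coefficient is the stretched Kostka number $K_{N\lambda,N\nu}$. That number counts lattice points in the Gelfand--Tsetlin polytope with prescribed weight. The interlacing inequalities alone are totally unimodular, but the row-sum (weight) equations destroy this. De Loera and McAllister showed these weight fibres can have non-integral vertices, even though their Ehrhart functions are polynomial. For general $\alpha$, the Kogan-face model also gives a union of faces rather than a single polytope. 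Note too that if integrality held, your Steps~1--2 would be superfluous.

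\textbf{Route (b).} This route is only conditional: you never show that the shifts $\gamma$ can be absorbed. Those shifts are exactly why the chamber argument of Lemma~\ref{lem:fixedDemazureExpressions} gives only eventual polynomiality. Handling them is the nontrivial part of chamber-complex proofs of stretched Kostka polynomiality, and you give no argument for it.

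\textbf{The missing idea.} Integrality is not needed; a \emph{rational} polytope model suffices. The Kogan-face model, with inclusion--exclusion over the faces, shows that $N\mapsto[\xvec^{N\nu}]\keypoly_{N\alpha}$ is given by a quasipolynomial for all $N\geq0$. The faces are faces of the dilated Gelfand--Tsetlin polytope intersected with the weight fibre, and they scale with $N$. Your Steps~1--2 supply a polynomial $P$ that agrees with this function for $N\gg0$. Each constituent of the quasipolynomial is then a polynomial agreeing with $P$ at infinitely many $N$, so it equals $P$, and the function equals $P$ for every $N\geq0$. This is the paper's proof, and it replaces both alternatives in your Step~3.
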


\begin{proof}
Theorem~\ref{thm:affineKeyKostka}, with \(\beta=\eta=0\), gives a polynomial
for \(N\gg0\).  The Kogan-face model for Demazure characters realizes the
same function as an Ehrhart quasipolynomial
\cite{Kogan2005,KiritchenkoSmirnovTimorin2010}; see also
\cite{AlexanderssonAlhajjar2018}.  A quasipolynomial that agrees with one
polynomial for all sufficiently large \(N\) agrees with that polynomial for
every \(N\geq0\).
\end{proof}

\begin{proposition}[Products of affine keys]
\label{prop:productsAffineKeys}
Fix \(h\), and let \(\alpha_j\in\setN^h\) and
\(\beta_j\in\setZ^h\), for \(1\leq j\leq s\), such that
\(N\alpha_j+\beta_j\) is a weak composition for \(N\gg0\).  If
\(\nu\in\setN^h\), \(\eta\in\setZ^h\), and \(N\nu+\eta\) is eventually
nonnegative, then
\[
[\xvec^{N\nu+\eta}]
\prod_{j=1}^s\keypoly_{N\alpha_j+\beta_j}(\xvec)
\]
is eventually polynomial.
\end{proposition}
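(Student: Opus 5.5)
The plan is to repeat the proof of Theorem~\ref{thm:affineKeyKostka} factor by factor, and then invoke the product clause of Lemma~\ref{lem:fixedDemazureExpressions}. Fix \(j\). Since \(\alpha_j\) and \(\beta_j\) are fixed, for \(N\gg0\) the relative order of the entries of \(N\alpha_j+\beta_j\) stabilizes. Two entries \(p,q\) are compared first by \((\alpha_j)_p\) versus \((\alpha_j)_q\), and then, if these agree, by \((\beta_j)_p\) versus \((\beta_j)_q\). Hence the decreasing rearrangement has the form \(N\lambda_j+\mu_j\) with \(\lambda_j,\mu_j\in\setZ^h\) fixed. Likewise, the shortest permutation \(w_j\) with \(w_j(N\lambda_j+\mu_j)=N\alpha_j+\beta_j\) is fixed, because it depends only on the stabilized order pattern, including which entries tie. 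Choosing a reduced word for \(w_j\) gives
\[
\keypoly_{N\alpha_j+\beta_j}=\pi_{w_j}\,\xvec^{N\lambda_j+\mu_j}
\qquad (N\gg0).
\]
This is a fixed finite sequence of the two permitted operations applied to \(1\): first multiply by \(\xvec^{N\lambda_j+\mu_j}\), then apply the fixed \(\pi_i\)'s.

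Taking \(N\) larger than all the finitely many thresholds (one for each \(j\), and one for nonnegativity of \(N\nu+\eta\)), each factor is such a fixed Demazure expression. The product statement in Lemma~\ref{lem:fixedDemazureExpressions} then says directly that \(N\mapsto[\xvec^{N\nu+\eta}]\prod_j\keypoly_{N\alpha_j+\beta_j}\) is eventually polynomial. Since each factor is a genuine polynomial, the coefficient extracted from the Laurent-series expansion used in the lemma agrees with the honest monomial coefficient. In the lemma's proof, the expansions are all taken in a common completion in which the positive-root denominators are inverted as power series. If \(N\nu+\eta\) has the wrong total degree, the coefficient is \(0\), which is trivially polynomial, so no degree hypothesis is needed.

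The only point needing care is the product clause of the lemma, so I would check it explicitly. Expand each factor as a finite sum of terms \(C\xvec^{N\beta+\gamma}\prod_{\rho\in\Phi}(1-\xvec^{-\rho})^{-1}\), with all \(\rho\) positive. Multiplying these out gives a finite sum of terms of the same shape: the exponents \(N\beta+\gamma\) add, and the root multisets concatenate. Each term's coefficient is then a vector partition function for a totally unimodular matrix of positive roots, evaluated along an affine ray, and that is eventually polynomial by the chamber argument already given. I expect no real obstacle beyond confirming that the tie-breaking makes \(w_j\) and \(\mu_j\) independent of \(N\) for large \(N\).
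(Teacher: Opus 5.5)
Your proposal is correct and follows the paper's proof: for each factor, take the eventually fixed decreasing rearrangement and reduced word from the proof of Theorem~\ref{thm:affineKeyKostka}, then apply the product clause of Lemma~\ref{lem:fixedDemazureExpressions}. Your additional remarks are sound elaborations of details the paper leaves implicit: tie-breaking by \(\beta_j\) makes \(w_j\) and \(\mu_j\) independent of \(N\); coefficients computed in a common completion agree with the honest monomial coefficients; and a degree mismatch forces the coefficient to be eventually zero.
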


\begin{proof}
Use the fixed reduced word from the proof of
Theorem~\ref{thm:affineKeyKostka} for each factor, and apply the product part
of Lemma~\ref{lem:fixedDemazureExpressions}.  The claim follows.
\end{proof}

Theorem~\ref{thm:affineKeyKostka} extends the polynomiality result from joint
work with E.~Kantarc{\i} O{\u{g}}uz from stretched row-flagged skew Schur
polynomials and cylindric Schur functions to affine families of key
polynomials~\cite{AlexanderssonOguz2023x}.

\section{Stretching Rothe diagrams}
\label{sec:stretchingRotheDiagrams}

Let \(\defin{\symS_m}\) denote the symmetric group on \(m\) letters.
For \(w\in\symS_m\), its \defin{Rothe diagram} is
\[
\defin{D(w)}
\coloneqq
\{(i,j):1\leq i,j\leq m,\ j<w(i),\ i<w^{-1}(j)\}.
\]
We use matrix coordinates: \(i\) is the row index and \(j\) is the column
index.  Thus the cells in row \(i\) are the pairs \((i,j)\in D(w)\).  The row
lengths form its \defin{Lehmer code},
\[
\defin{\code(w)_i}
\coloneqq
\#\{j>i:w(j)<w(i)\}.
\]
Conversely, if \(\gamma=(\gamma_1,\dotsc,\gamma_h)\) is a weak composition
and \(t\) satisfies \(\gamma_i\leq h+t-i\) for all \(i\), then
\((\gamma_1,\dotsc,\gamma_h,0^t)\) is the Lehmer code of a unique
permutation in \(\symS_{h+t}\).

We regard \(D(w)\) as a word of columns.  The \defin{type of a column} is the
set of rows occupied in that column.

\begin{example}[The affine column blocks for \(321654\)]
\label{ex:rothe321654}
Let \(\defin{u}\coloneqq321654\), so that
\(\code(u)=(2,1,0,2,1,0)\).  For \(N\geq3\),
the nonempty columns of \(D(\stretchperm{N}{u})\), read from left to right,
have types
\begin{equation}
\label{eq:321654ColumnWord}
\{1,2\},\quad
\{1,2,4,5\}^{N-1},\quad
\{1\},\quad
\{1,4,5\},\quad
\{1,4\}^{N-2},\quad
\{4\}^{2}.
\end{equation}
For \(N=4\), we have
\[
\stretchperm{4}{u}=(9,5,1,12,7,2,3,4,6,8,10,11),
\]
and its Rothe diagram is shown below.
\[
\rotheDiagramWithBlocks[scale=.3125]{9,5,1,12,7,2,3,4,6,8,10,11}
\]
The three blue columns have type \(\{1,2,4,5\}\), while the two green
columns have type \(\{1,4\}\).  These are the two column blocks whose
lengths grow with \(N\).
\end{example}

A permutation is \defin{\(k\)-Grassmannian} if it has no descents except
possibly at position \(k\).  For Grassmannian permutations, code stretching
has the usual interpretation as stretching a Ferrers shape.

\begin{lemma}[Grassmannian stretching]
\label{lem:grassmannianStretching}
Let \(\lambda=(\lambda_1,\dotsc,\lambda_k)\) be a partition, and let \(u\) be
the permutation with Lehmer code \((\lambda_k,\dotsc,\lambda_1)\).  Then
\(u\) is \(k\)-Grassmannian and
\(\schubertS_u(x_1,\dotsc,x_k)=\defin{s_\lambda(x_1,\dotsc,x_k)}\), the Schur
polynomial indexed by \(\lambda\).  Moreover,
\(\stretchperm{N}{u}\) is the Grassmannian permutation associated with the
stretched shape \(N\lambda\), and
\[
\schubertS_{\stretchperm{N}{u}}(x_1,\dotsc,x_k)
=
s_{N\lambda}(x_1,\dotsc,x_k).
\]
\end{lemma}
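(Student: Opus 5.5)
The plan is to verify the three assertions in order: that \(u\) is \(k\)-Grassmannian, the Schur identity, and compatibility with stretching.

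\textbf{Grassmannian property.} Recall that \(u(i)>u(i+1)\) if and only if \(\code(u)_i>\code(u)_{i+1}\); this follows directly from the definition \(\code(w)_i=\#\{j>i:w(j)<w(i)\}\). The code of \(u\) is \((\lambda_k,\dotsc,\lambda_1,0,0,\dotsc)\). It is weakly increasing in positions \(1,\dotsc,k\), since \(\lambda\) is a partition read backwards, and it is zero from position \(k+1\) on. So the only possible descent is at position \(k\). The code is a valid Lehmer code once enough trailing zeroes are appended, which is the convention fixed in the introduction.

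\textbf{Schur identity.} This is the classical fact that the Schubert polynomial of a \(k\)-Grassmannian permutation with shape \(\lambda\) is the Schur polynomial \(s_\lambda(x_1,\dotsc,x_k)\). I would cite it, for instance from Macdonald's notes on Schubert polynomials. The Grassmannian permutation attached to \(\lambda\) is the one whose code is \(\lambda\) reversed in the first \(k\) entries. Reversed is correct here: the entry \(\code(u)_k=\lambda_1\) is the largest, matching the Rothe diagram of a Grassmannian permutation, whose rows \(1,\dotsc,k\) read from bottom to top give the shape \(\lambda\).

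A self-contained check would use the key polynomials of Section~\ref{sec:keyKostka}. A Schubert polynomial whose code is weakly increasing in the first \(k\) positions and zero afterwards is the key polynomial \(\keypoly_{(\lambda_k,\dotsc,\lambda_1)}\). That key polynomial equals \(\pi_{w_0^{(k)}}\xvec^\lambda=s_\lambda(x_1,\dotsc,x_k)\), by the Demazure character formula for the longest element of \(\symS_k\). Either route is routine; I expect the main obstacle here to be only bookkeeping of the reversal convention, not any real difficulty.

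\textbf{Stretching.} By definition, \(\stretchperm{N}{u}=\code^{-1}(N\lambda_k,\dotsc,N\lambda_1,0^t)\). Since \(N\lambda\) is again a partition with at most \(k\) parts, this code is again weakly increasing on the first \(k\) entries and zero afterwards. Applying the previous two steps to \(N\lambda\) in place of \(\lambda\) shows that \(\stretchperm{N}{u}\) is the \(k\)-Grassmannian permutation of shape \(N\lambda\) and that
\[
\schubertS_{\stretchperm{N}{u}}(x_1,\dotsc,x_k)=s_{N\lambda}(x_1,\dotsc,x_k).
\]
Extra trailing fixed points do not change the Schubert polynomial, by the stability of Schubert polynomials under the embedding \(\symS_m\hookrightarrow\symS_{m+1}\). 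Hence the choice of \(t\) is irrelevant.
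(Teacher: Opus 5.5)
Your proposal is correct and matches the paper's proof: the paper cites the standard Schubert--Schur identity for Grassmannian permutations and notes that \(\code(\stretchperm{N}{u})=(N\lambda_k,\dotsc,N\lambda_1)\), so the stretched case is the same identity applied to \(N\lambda\). Your descent check via the code and your remark on trailing fixed points make explicit what the paper leaves implicit. Your alternative key-polynomial route is no more self-contained, since it still needs the cited fact that this Schubert polynomial equals a single key polynomial.
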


\begin{proof}
The Schubert--Schur identity for Grassmannian permutations is standard; see,
for example,~\cite{ReinerShimozono1995}.  The remaining assertion follows from
\[
\code(\stretchperm{N}{u})
=
(N\lambda_k,\dotsc,N\lambda_1).
\]
\end{proof}

Pak and Slonim prove a stability statement for pipe-dream polytopes under code
scaling~\cite[Rem.~4.4]{PakSlonim2026x}.  The next proposition gives the
corresponding finite-block description for Rothe diagram columns, with an
additional affine shift of the code.

\begin{proposition}[Affine column multiplicities]
\label{prop:affineColumnMultiplicities}
Fix \(\alpha\in\setN^h\) and \(\beta\in\setZ^h\), and suppose that
\(N\alpha+\beta\) is nonnegative for \(N\gg0\).  Let \(w(N)\) be the
permutation with Lehmer code \(N\alpha+\beta\), after appending sufficiently
many trailing zeroes.  For \(N\gg0\), every nonempty column of \(D(w(N))\)
has its rows in \(\{1,\dotsc,h\}\), and each column type
\(C\subseteq\{1,\dotsc,h\}\) occurs with multiplicity
\[
a_C N+b_C,
\]
where \(a_C,b_C\in\setZ\) are independent of \(N\).
\end{proposition}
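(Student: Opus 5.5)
The plan is to compute $D(w(N))$ directly from the one-line notation of $w(N)$. The key observation is that only the first $h$ values of $w(N)$ matter, and for $N\gg0$ they are affine functions of $N$ that appear in a fixed relative order.

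First, the claim about rows. For $i>h$ the code entry of $w(N)$ is $0$, so row $i$ of $D(w(N))$ is empty. Hence every nonempty column has its rows in $\{1,\dotsc,h\}$. Appending further trailing zeroes only adds fixed points beyond every value $w(i)$ with $i\le h$, and these contribute no cells. So the choice of $t$ is irrelevant.

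Second, the positions $w(N)(1),\dotsc,w(N)(h)$ are eventually affine in $N$ and eventually in a fixed relative order. I would prove this by induction on $i$, using the standard inversion formula for the Lehmer code:
\[
w(i)=c_i+1+\#\{k>i: w(k)<w(i)\}\ \text{rephrased as}\ w(i)=\text{the }(c_i+1)\text{-st smallest value not among }w(1),\dotsc,w(i-1),
\]
where $c_i=N\alpha_i+\beta_i$. Suppose $w(1),\dotsc,w(i-1)$ are affine, say $A_kN+B_k$, in a fixed order for $N\gg0$. Determining which of them lie below the $(c_i+1)$-st free value amounts to comparing finitely many affine functions of $N$. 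All these comparisons stabilize for large $N$. Once they stabilize, $w(i)=c_i+1+m$, where $m$ is a fixed count, so $w(i)$ is again affine. This stabilization of finitely many affine comparisons is the only mildly delicate step, and it is routine.

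Third, classify the columns $j$. Let $v_1<\dotsb<v_h$ be the sorted values $w(N)(1),\dotsc,w(N)(h)$. These are affine in $N$ with a fixed sorting permutation, and we set $v_0=0$.

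If $j=w(k)$ with $k\le h$, the column type is $\{i<k: w(i)>w(k)\}$. This set is fixed for $N\gg0$ because the relative order is fixed. These are at most $h$ columns, so they contribute a constant amount to each multiplicity.

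If $j$ is not among these values, then $w^{-1}(j)>h$, so the type is $\{i\le h: w(i)>j\}$. This set depends only on the gap $(v_r,v_{r+1})$ containing $j$. For $0\le r<h$ it equals $C_r\coloneqq\{i\le h: w(i)\ge v_{r+1}\}$, a fixed set. The number of such $j$ is $v_{r+1}-v_r-1$, which is affine in $N$. Columns with $j>v_h$ have empty type.

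Summing over these finitely many sources, the multiplicity of each nonempty type $C$ is a sum of affine functions of $N$. It therefore has the form $a_CN+b_C$ with $a_C,b_C\in\setZ$ independent of $N$, as claimed.
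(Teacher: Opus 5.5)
Your proof is correct and follows essentially the same route as the paper: rows beyond $h$ are empty, and the first $h$ values of $w(N)$ are shown by induction to be affine and in a fixed relative order, using the ``$(c_i+1)$-st unused value'' description. Columns then split into the at most $h$ columns $j=w(N)(k)$, which have fixed types, and the gaps between consecutive sorted values, which have affine lengths. One small slip: the displayed identity should read $w(i)=c_i+1+\#\{k<i:w(k)<w(i)\}$, since with $k>i$ it would give $w(i)=2c_i+1$; the rephrasing you actually use is correct.
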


\begin{proof}
Write \(\alpha=(a_1,\dotsc,a_h)\) and
\(\beta=(b_1,\dotsc,b_h)\).  Since the code entries after position \(h\)
are zero, every row of \(D(w(N))\) with index greater than \(h\) is empty.
In the inverse Lehmer-code construction, \(w(N)(i)\) is the
\((Na_i+b_i+1)\)-st positive integer not used in the first \(i-1\) positions.
Thus, for some \(r\in\{0,\dotsc,i-1\}\),
\[
w(N)(i)=Na_i+b_i+1+r.
\]
Induction on \(i\) shows that this \(r\) is constant for \(N\gg0\).  Indeed,
assume that the earlier values \(w(N)(1),\dotsc,w(N)(i-1)\) are affine in
\(N\).  For a fixed \(r\), the value \(Na_i+b_i+1+r\) is selected precisely
when exactly \(r\) earlier values are smaller, and no earlier value is equal
to it.  These are finitely many comparisons between affine functions.  Since
two affine functions are either identical or meet at at most one value of
\(N\), each comparison has an eventually fixed truth value.  Since exactly
one \(r\) is selected for each \(N\), the selected \(r\) is eventually
constant.  Hence the first \(h\) values of \(w(N)\) are affine in \(N\), and
their strict relative order is fixed.

If a column index \(j\) is not one of these values, its column type is
\[
\{i\in\{1,\dotsc,h\}:j<w(N)(i)\}.
\]
Indeed, if \(j<w(N)(i)\) and \(j\) has not occurred among the first \(h\)
values, then \(j\) is unused before position \(i\), so \(i<w(N)^{-1}(j)\).
The type is empty when \(j\) exceeds all first \(h\) values, and it is
constant as \(j\) runs between consecutive values of
\(w(N)(1),\dotsc,w(N)(h)\).  Since the strict relative order of these affine
values is fixed for large \(N\), each gap is either eventually empty or has
size equal to the difference of its endpoints minus one.  In either case its
size is affine in \(N\).  The initial gap has size
\(\min_i w(N)(i)-1\), also affine.

It remains to account for the columns \(j=w(N)(s)\) with \(s\leq h\).  Such a
column has type
\[
\{i<s:w(N)(s)<w(N)(i)\},
\]
which is fixed by the relative order.  Summing the gap sizes and these
individual columns proves the claim.
\end{proof}

\section{Orthodontia and Schubert-Kostka coefficients}
\label{sec:orthodontia}

Postnikov and Stanley define the \defin{Schubert--Kostka matrix} as the matrix
of monomial coefficients \(K_{u,a}=[\xvec^a]\schubertS_u\)
\cite[Sec.~17]{PostnikovStanley2008}.  Pak and Slonim use the term
\defin{Schubert--Kostka number} for these coefficients
\cite[Sec.~2.6]{PakSlonim2026x}.

\subsection{Magyar's orthodontic formula}

We recall Magyar's orthodontic construction and formula
\cite[Sec.~4.3 and Prop.~15]{Magyar1998SchubertBottSamelson}.

For \(r\geq1\), put
\[
\defin{[r]}\coloneqq\{1,2,\dotsc,r\},
\qquad
\defin{\xvec_r}\coloneqq x_1x_2\dotsm x_r.
\]
In this construction, a Rothe diagram is regarded as a multiset of nonempty
column types.  For each \(r\), remove all columns of type \([r]\), and let
\(\defin{k_r}\) be their number.  If any columns remain, order their types
lexicographically.  In the first remaining type, choose its
smallest \defin{missing tooth} \(i\): row \(i\) is absent and row \(i+1\) is
present.
Interchange rows \(i\) and \(i+1\) in every remaining column.  Remove the
columns that have become \([i]\), record their number as \(m\), and
repeat.  This gives the \defin{orthodontic word}
\[
\defin{\mathbf{i}}\coloneqq(i_1,\dotsc,i_\ell)
\]
and multiplicities \(\defin{m_1,\dotsc,m_\ell}\).  Each iteration contributes
one entry to \(\mathbf{i}\) and performs one row interchange; thus \(\ell\) is
the number of row interchanges.

Magyar proves that the process terminates for every fixed diagram, and his
formula states that
\begin{equation}
\label{eq:magyarOrthodontia}
\schubertS_w
=
\left(\prod_{r=1}^h\xvec_r^{k_r}\right)
\bigl(
\pi_{i_1}\circ \xvec_{i_1}^{m_1}
\circ\pi_{i_2}\circ \xvec_{i_2}^{m_2}
\circ\dotsm
\circ\pi_{i_\ell}\circ \xvec_{i_\ell}^{m_\ell}
\bigr)(1).
\end{equation}
The composition is read from right to left, with monomials acting by
multiplication.

Magyar proves that \eqref{eq:magyarOrthodontia} can be written as a sum of
\(2^\ell\) rational functions
\cite[Thm.~3]{Magyar1998SchubertBottSamelson}.
After rewriting each negative denominator root as a positive one, every
summand has the form
\begin{equation}
\label{eq:magyarRationalTerm}
\epsilon\,\xvec^\gamma
\prod_{\rho\in\Phi}\frac{1}{1-\xvec^{-\rho}},
\end{equation}
where \(\epsilon\in\{1,-1\}\), \(\gamma\in\setZ^h\), and \(\Phi\) is a
multiset of positive type~\(A\) roots.  This is the root-denominator form
used in the proof of Lemma~\ref{lem:fixedDemazureExpressions}.

\subsection{Affine orthodontic data}

To apply Lemma~\ref{lem:fixedDemazureExpressions} uniformly in \(N\), it
remains to show that the operator word is fixed and that the exponents are
affine in \(N\).

\begin{lemma}[Stable orthodontic data]
\label{lem:stableOrthodonticData}
Let \(w(N)\) be a permutation family whose Lehmer code is affine in \(N\),
as in Proposition~\ref{prop:affineColumnMultiplicities}.  For \(N\gg0\),
the orthodontic word \(\mathbf{i}\) for \(D(w(N))\) is independent of \(N\),
and all multiplicities \(k_r(N)\) and \(m_j(N)\) are affine in \(N\).
\end{lemma}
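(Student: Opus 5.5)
The plan is to follow the orthodontic algorithm on the multiset of column types and show by induction on the steps that its state is eventually "affine". By Proposition~\ref{prop:affineColumnMultiplicities}, for \(N\gg0\) the diagram \(D(w(N))\) is a multiset of column types \(C\subseteq[h]\), with each type occurring \(a_CN+b_C\) times. The set of types with positive multiplicity is therefore eventually fixed: it consists of the types with \(a_C>0\), together with those having \(a_C=0\) and \(b_C>0\). I would carry an invariant through the algorithm. At each stage, for \(N\gg0\), the current multiset has a fixed support \(T\), and each \(C\in T\) has multiplicity \(a'_CN+b'_C\) that is eventually positive.

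The induction goes as follows.

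First, the initial removal of full columns of type \([r]\) gives \(k_r(N)=a_{[r]}N+b_{[r]}\), which is affine (or zero if \([r]\notin T\)). This leaves the support \(T\setminus\{[1],\dots,[h]\}\) with unchanged multiplicities.

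Next comes a single orthodontic step. The lexicographically first remaining type depends only on the fixed support, not on \(N\), so its smallest missing tooth \(i\) is fixed. Interchanging rows \(i\) and \(i+1\) acts by a fixed bijection \(C\mapsto s_i(C)\) on types. The new multiplicity of each type is then a sum of the old affine multiplicities over the preimage. For \(s_i\) this preimage is a single type, but I would phrase the step in terms of sums so that any merging is harmless. The new multiplicities are therefore affine with fixed support.

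The columns that became \([i]\) are then removed, and \(m=a'_{s_i[i]}N+b'_{s_i[i]}\) is affine. The remaining support is again fixed, so the invariant is preserved.

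Termination needs care, and this is where I expect the main subtlety. Magyar's termination argument applies to each fixed diagram, but I want a single bound on the number of steps that is uniform in \(N\). I would observe that the sequence of supports, and hence the word \(\mathbf{i}\), is determined entirely by the fixed support set and not by the multiplicities. The algorithm's choices (lex-first type, smallest missing tooth, the row swap, which types become \([i]\)) all depend only on which types are present. So I can run the algorithm on a single representative diagram, for instance one fixed large \(N_0\), and termination there bounds the length \(\ell\) for every \(N\) with the same support data.

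One point must be checked to make "depends only on support" rigorous. The invariant should record eventual positivity, not merely that a multiplicity is affine, so that \(T\) is exactly the eventual support. A type with \(a'_C=0\) and \(b'_C\le 0\) has to be dropped, and this is consistent for all \(N\gg0\). Since there are finitely many steps, bounded by the \(N_0\) run, I take the maximum of finitely many thresholds on \(N\). This gives a single \(N_1\) beyond which the word \(\mathbf{i}\) is constant and every \(k_r(N)\) and \(m_j(N)\) is affine in \(N\). This proves the lemma.
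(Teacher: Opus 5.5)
Your proposal is correct and follows essentially the same route as the paper: carry the invariant ``fixed support with affine, eventually positive multiplicities'' through each orthodontic step, note that the lexicographic and smallest-missing-tooth choices depend only on the support, and get uniform termination by running the algorithm for one fixed large \(N_0\). Your extra observation is a harmless simplification of the paper's repeated positivity argument: the swap \(C\mapsto s_i(C)\) is a bijection on types, so later multiplicities are just permuted initial ones and positivity need only be checked at the start.
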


\begin{proof}
By Proposition~\ref{prop:affineColumnMultiplicities}, the initial
column-type multiplicities are affine in \(N\).  If an integer-valued affine
function is nonnegative for all large \(N\), then it is either identically
zero, eventually a positive constant, or eventually tends to \(+\infty\).
Thus the initial support is fixed for large \(N\).  The multiplicities
\(k_r(N)\) of the removed columns \([r]\) are therefore affine.

We now run Magyar's algorithm.  Suppose at some stage that the support of the
remaining column types is fixed and that all their multiplicities are affine.
The lexicographic order and smallest-tooth rule choose the same next index
\(i\) for all sufficiently large \(N\).  Swapping rows \(i\) and \(i+1\)
sends each fixed column type to a fixed column type, so the new
multiplicities are sums of affine functions.  The recorded multiplicity
\(m(N)\) of the columns that have become \([i]\) is affine, and the remaining
support is again eventually fixed by the same positivity argument.

Fix one sufficiently large \(N_0\).  Inducting through the finitely many steps
of Magyar's algorithm for \(N_0\), the preceding paragraph shows that, after
possibly increasing the lower bound on \(N\), the same support sequence and
the same choices occur for every \(N\).  The support is empty at the final
stage for \(N_0\), hence also for all large \(N\).
Thus the successive supports, the orthodontic word, and the number of steps
are eventually independent of \(N\), while all recorded multiplicities are
affine in \(N\).
\end{proof}

\begin{example}[Orthodontia for \(321654\)]
\label{ex:orthodontia321654}
Continue with \(u=321654\) and the column word in
\eqref{eq:321654ColumnWord}.  For \(N\geq3\), orthodontia yields
\[
\mathbf{i}=(3,4,2,3,1),
\qquad
(m_1,\dotsc,m_5)=(0,N-1,N-2,1,2).
\]
Consequently,
\[
\schubertS_{\stretchperm{N}{321654}}
=
\xvec_1\xvec_2
\bigl(
\pi_3\circ\pi_4\circ \xvec_4^{N-1}
\circ\pi_2\circ \xvec_2^{N-2}
\circ\pi_3\circ \xvec_3
\circ\pi_1\circ \xvec_1^2
\bigr)(1).
\]
The operator word has length five and is independent of \(N\).  The two long
column blocks appear only in the powers \(N-1\) and \(N-2\).
\end{example}

\begin{theorem}[Affine Schubert-Kostka polynomiality]
\label{thm:affineSchubertKostka}
Fix \(\alpha,\nu\in\setN^h\) and \(\beta,\eta\in\setZ^h\), and let \(w(N)\)
be the permutation whose Lehmer code is obtained from \(N\alpha+\beta\) by
appending enough trailing zeroes to satisfy the Lehmer-code inequalities.
Suppose that \(N\alpha+\beta\) and
\(N\nu+\eta\) are nonnegative and have the same total degree for all
sufficiently large \(N\).  Then
\[
N\longmapsto
[\xvec^{N\nu+\eta}]\schubertS_{w(N)}(\xvec)
\]
is eventually polynomial.
\end{theorem}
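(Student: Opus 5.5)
The plan is to put \(\schubertS_{w(N)}\) into the shape required by Lemma~\ref{lem:fixedDemazureExpressions} and then quote that lemma. Everything needed is already in place: Proposition~\ref{prop:affineColumnMultiplicities} gives the column multiplicities, and Lemma~\ref{lem:stableOrthodonticData} turns them into stable orthodontic data. The remaining work is to check that Magyar's formula~\eqref{eq:magyarOrthodontia} fits the allowed operations.

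\emph{Step one.} By Lemma~\ref{lem:stableOrthodonticData}, there is an \(N_1\) with the following property. For all \(N\ge N_1\), the orthodontic word \(\mathbf{i}=(i_1,\dotsc,i_\ell)\) of \(D(w(N))\) does not depend on \(N\). Moreover, we may write
\[
k_r(N)=p_rN+q_r,\qquad m_j(N)=p'_jN+q'_j,
\]
with fixed integers \(p_r,q_r,p'_j,q'_j\). By Proposition~\ref{prop:affineColumnMultiplicities}, every nonempty column lies in rows \(\{1,\dotsc,h\}\). Hence only the indices \(r\le h\) occur, and every \(i_j<h\), since a missing tooth \(i\) needs row \(i+1\le h\) to be present. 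So we work throughout in the variables \(x_1,\dotsc,x_h\), and \(\schubertS_{w(N)}\) is a polynomial in \(\xvec\).

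\emph{Step two.} Substitute these data into~\eqref{eq:magyarOrthodontia}. The prefactor is
\[
\prod_{r}\xvec_r^{k_r(N)}=\xvec^{Na+b},
\]
where \(a_s=\sum_{r\ge s}p_r\) and \(b_s=\sum_{r\ge s}q_r\) are fixed vectors in \(\setZ^h\). Likewise, each factor \(\xvec_{i_j}^{m_j(N)}\) is a monomial \(\xvec^{Na^{(j)}+b^{(j)}}\) with fixed exponent vectors.

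Reading~\eqref{eq:magyarOrthodontia} from right to left, \(\schubertS_{w(N)}\) is obtained from \(1\) by a fixed sequence of operations:
\[
\text{multiply by }\xvec_{i_\ell}^{m_\ell(N)},\ \text{apply }\pi_{i_\ell},\ \dotsc,\ \text{multiply by }\xvec_{i_1}^{m_1(N)},\ \text{apply }\pi_{i_1},
\]
followed by a final multiplication by the prefactor. Each step is of type (1) or type (2) in Lemma~\ref{lem:fixedDemazureExpressions}, with data independent of \(N\) for \(N\ge N_1\). The lemma, applied with the fixed \(\nu,\eta\), then shows that \(N\mapsto[\xvec^{N\nu+\eta}]\schubertS_{w(N)}\) is eventually polynomial. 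The hypotheses that \(N\nu+\eta\) is nonnegative and has degree \(N\ell(w(N))\) are not needed for the argument. They only ensure that the coefficient is a genuine Schubert--Kostka number; if the degree were wrong, the coefficient would simply be zero.

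\emph{Main obstacle.} The substantive content is the stabilization of Magyar's algorithm, which Lemma~\ref{lem:stableOrthodonticData} supplies. In the present proof, the only point needing care is a subtlety about the exponents \(m_j(N)\). They are affine in \(N\) and nonnegative for large \(N\), but they may be eventually constant. That is harmless, because Lemma~\ref{lem:fixedDemazureExpressions} allows arbitrary fixed \(a,b\), including \(a=0\). I would also confirm that Lemma~\ref{lem:fixedDemazureExpressions} tolerates the operators \(\pi_{i}\) acting on polynomials whose exponents grow with \(N\). It does, because its proof tracks exponents of the form \(N\beta+\gamma\) symbolically through the root-denominator expansion.
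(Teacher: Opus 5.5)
Your proposal is correct and follows essentially the same route as the paper's proof. Lemma~\ref{lem:stableOrthodonticData} fixes the orthodontic word and makes every exponent in Magyar's formula~\eqref{eq:magyarOrthodontia} affine in \(N\), and then Lemma~\ref{lem:fixedDemazureExpressions} applies directly; you just spell out the prefactor exponents and the right-to-left order of operations in more detail. One small slip: in your remark on the hypotheses, the relevant degree is \(\ell(w(N))=|N\alpha+\beta|\), not \(N\ell(w(N))\).
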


\begin{proof}
By Lemma~\ref{lem:stableOrthodonticData}, the word in
\eqref{eq:magyarOrthodontia} is fixed for \(N\gg0\), and all exponents
\(k_r(N)\) and \(m_j(N)\) are affine in \(N\).  Hence
\eqref{eq:magyarOrthodontia} is a fixed Demazure expression of the form
covered by Lemma~\ref{lem:fixedDemazureExpressions}, which proves the claim.
\end{proof}

\subsection{Stable key expansions}

The preceding argument proves coefficient polynomiality.  Schubert polynomials
are key positive; this follows, for example, from the flagged
Littlewood--Richardson rule of Reiner and Shimozono~\cite{ReinerShimozono1995}
and from Assaf's Kohnert model~\cite{Assaf2021}.  The following example
suggests that this positive key expansion has a stronger stability property
under stretching.

\begin{example}[A stable key expansion]
\label{ex:stableKeyExpansion321654}
For the permutation \(u=321654\) from Example~\ref{ex:orthodontia321654},
the key expansion stabilizes at \(N=3\).  Omitting trailing zeroes, for
\(N\geq3\) we have
\[
\begin{aligned}
\schubertS_{\stretchperm{N}{321654}}
={}&
\keypoly_{(2N,N,0,2N,N)}
+\keypoly_{(2N,N+1,0,2N,N-1)}\\
&+\keypoly_{(2N,2N+1,0,N-1,N)}
+\keypoly_{(2N+1,N,0,2N-1,N)}\\
&+\keypoly_{(2N+1,N+1,0,2N-1,N-1)}
+\keypoly_{(2N+2,N,0,2N-2,N)}\\
&+\keypoly_{(2N+2,N+1,0,2N-2,N-1)}
+\keypoly_{(2N+2,2N-1,0,N-1,N)}.
\end{aligned}
\]
Thus the expansion has eight terms with constant coefficient \(1\), and every
key index is affine in \(N\).
\end{example}

\begin{conjecture}[Stable affine key expansion]
\label{conj:stableAffineKeyExpansion}
For every permutation \(u\), there exist positive integers \(c_1,\dotsc,c_t\)
and affine weak compositions
\[
N\gamma_j+\delta_j,
\qquad 1\leq j\leq t,
\]
such that, for all sufficiently large \(N\),
\[
\schubertS_{\stretchperm{N}{u}}
=
\sum_{j=1}^t
c_j\keypoly_{N\gamma_j+\delta_j}.
\]
In particular, both the number of terms and their coefficients eventually
stabilize, while the key indices are affine in \(N\).
\end{conjecture}

\begin{remark}
Conjecture~\ref{conj:stableAffineKeyExpansion} is stronger than
Theorem~\ref{thm:affineSchubertKostka}.  Magyar's formula fixes the number of
rational summands, but it does not fix the number of Demazure components in
the positive key expansion.  Example~\ref{ex:stableKeyExpansion321654}
exhibits the behavior asserted in the conjecture, but the divided-difference
argument does not imply this stronger conclusion.
\end{remark}

For a partition \(\lambda\) and a weakly increasing row flag
\(\mathbf b=(b_1,\dotsc,b_{\ell(\lambda)})\) with \(b_i\geq i\), write
\(\defin{s_\lambda(\mathbf b)}\) for the corresponding flagged Schur
polynomial.

\begin{conjecture}[Flagged Schur analogue]
\label{conj:stableFlaggedSchurKeyExpansion}
Let \(N\lambda+\mu\) be an affine family of partitions with a fixed number of
rows, and fix such a row flag \(\mathbf b\).  Then the key expansion of
the flagged Schur polynomial \(s_{N\lambda+\mu}(\mathbf b)\) eventually has a
fixed number of terms and fixed coefficients, with every key index affine in
\(N\).
\end{conjecture}

\begin{remark}[Flagged Schur functions]
For vexillary permutations, Schubert polynomials are flagged Schur functions,
and M.~Wachs' symmetrizing-operator formula puts affine families with a fixed
flag in the form covered by Lemma~\ref{lem:fixedDemazureExpressions}
\cite{Wachs1985}.  Their affine monomial coefficients are therefore eventually
polynomial.  This coefficientwise conclusion does not imply
Conjecture~\ref{conj:stableFlaggedSchurKeyExpansion}.
\end{remark}

M.~Shimozono and T.~Yu proved that Grothendieck polynomials expand positively
in Lascoux polynomials~\cite{ShimozonoYu2021}, which motivates the following
\(K\)-theoretic analogue.  We retain the connective-\(K\) parameter \(\beta\),
so that \(\beta=0\) recovers Schubert and key polynomials, and write
\(\defin{\mathfrak G_w^{(\beta)}}\) and
\(\defin{\mathfrak L_\alpha^{(\beta)}}\) for the corresponding
connective-\(K\) Grothendieck and Lascoux polynomials.

\begin{conjecture}[Stable Grothendieck--Lascoux expansion]
\label{conj:stableGrothendieckLascouxExpansion}
For every permutation \(u\), there exist positive integers \(c_1,\dotsc,c_t\),
nonnegative integers \(e_1,\dotsc,e_t\), and affine weak compositions
\(N\gamma_j+\delta_j\), for \(1\leq j\leq t\), such that, for \(N\gg0\),
\[
\mathfrak G_{\stretchperm{N}{u}}^{(\beta)}
=
\sum_{j=1}^t
c_j\beta^{e_j}\mathfrak L_{N\gamma_j+\delta_j}^{(\beta)}.
\]
\end{conjecture}

\section{Products and Schubert structure constants}
\label{sec:products}

The divided-difference argument is stable under finite products.  We record
the affine form needed for the subsequent extraction of Schubert structure
constants.

\begin{proposition}[Affine product Schubert-Kostka polynomiality]
\label{prop:productSchubertKostka}
Fix \(h\), and for \(1\leq j\leq s\), fix
\(\alpha^{(j)}\in\setN^h\) and \(\beta^{(j)}\in\setZ^h\).  Suppose that
\(N\alpha^{(j)}+\beta^{(j)}\) is nonnegative for \(N\gg0\), for each \(j\).
For \(N\gg0\), let \(w_j(N)\) be the permutation whose Lehmer code is obtained
from \(N\alpha^{(j)}+\beta^{(j)}\) by appending enough trailing zeroes to
satisfy the Lehmer-code inequalities.
For every fixed
\(\nu\in\setN^h\) and \(\eta\in\setZ^h\),
\[
[\xvec^{N\nu+\eta}]
\prod_{j=1}^s
\schubertS_{w_j(N)}(\xvec)
\]
is eventually polynomial in \(N\).  If the total degrees eventually disagree, the
coefficient is eventually zero.
\end{proposition}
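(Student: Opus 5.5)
The plan is to express each factor \(\schubertS_{w_j(N)}\) as a fixed Demazure expression with affine exponents, and then apply the product part of Lemma~\ref{lem:fixedDemazureExpressions}.

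First, fix \(j\). The code of \(w_j(N)\) is \(N\alpha^{(j)}+\beta^{(j)}\), which is affine in \(N\) and eventually nonnegative. So Proposition~\ref{prop:affineColumnMultiplicities} and Lemma~\ref{lem:stableOrthodonticData} apply. For \(N\gg0\), the orthodontic word \(\mathbf{i}^{(j)}\) of \(D(w_j(N))\) is independent of \(N\), and the multiplicities \(k_r^{(j)}(N)\) and \(m_t^{(j)}(N)\) are affine in \(N\). All column types lie in \(\{1,\dotsc,h\}\), so every operator and monomial in Magyar's formula~\eqref{eq:magyarOrthodontia} involves only \(x_1,\dotsc,x_h\).

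Second, I check that each factor has the form required by Lemma~\ref{lem:fixedDemazureExpressions}. Each monomial \(\xvec_r^{k}\), with \(k=aN+b\), equals \(\xvec^{Na'+b'}\) for fixed \(a',b'\in\setZ^h\). The outer prefactor \(\prod_r\xvec_r^{k_r^{(j)}(N)}\) is likewise a single monomial of this type, and multiplying by it at the end is one of the allowed operations. The lemma starts from \(1\), as Magyar's formula does. Thus, taking a common lower bound on \(N\) over the finitely many indices \(j\), every factor is obtained from \(1\) by a fixed finite sequence of allowed operations. The product clause of Lemma~\ref{lem:fixedDemazureExpressions} then gives eventual polynomiality of the coefficient of \(\xvec^{N\nu+\eta}\).

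Finally, I treat the degree statement. Each \(\schubertS_{w_j(N)}\) is homogeneous of degree \(\ell(w_j(N))=|N\alpha^{(j)}+\beta^{(j)}|\). Hence the product is homogeneous of degree \(N\sum_j|\alpha^{(j)}|+\sum_j|\beta^{(j)}|\). This is an affine function of \(N\), and so is \(|N\nu+\eta|\). Two affine functions that are not identical agree for at most one \(N\). So if the degrees eventually disagree, the coefficient vanishes for all large \(N\), and the zero polynomial works.

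The main obstacle, though a mild one, is the uniformity of the threshold in \(N\) across the factors and across the stages of Magyar's algorithm. Lemma~\ref{lem:stableOrthodonticData} already handles this for each factor, so taking the maximum of finitely many thresholds suffices. I also need the exponents to be allowed to be affine with possibly negative constant terms, such as \(N-2\). This is fine, because the lemma permits Laurent monomials \(\xvec^{Na+b}\) with \(b\in\setZ^h\), and these exponents are genuinely nonnegative for \(N\gg0\) anyway.
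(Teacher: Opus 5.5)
Your proposal is correct and matches the paper's proof. You make each factor a fixed Magyar expression with affine exponents via Lemma~\ref{lem:stableOrthodonticData}, then invoke the product clause of Lemma~\ref{lem:fixedDemazureExpressions}; your common threshold over the finitely many factors and your homogeneity argument for the degree clause just make explicit what the paper leaves implicit.
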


\begin{proof}
For each factor, use the fixed orthodontic expression supplied by
Lemma~\ref{lem:stableOrthodonticData}.  The product assertion is then the last
part of Lemma~\ref{lem:fixedDemazureExpressions}, and the claim follows.
\end{proof}

We now use the Schubert analogue of the usual Weyl-denominator extraction of
Littlewood--Richardson coefficients.  For a weak composition
\(\alpha\in\setN^h\), write \(\defin{\schubertS_\alpha}\) for the Schubert polynomial
whose index has Lehmer code \(\alpha\).  Put
\(\defin{\mathbf 1}\coloneqq(1,\dotsc,1)\).  For \(\lambda\in\setZ^h\), choose an integer
\(k\) such that \(\lambda+k\mathbf 1\) is nonnegative, and define the
\defin{generalized Schubert polynomial}
\[
\defin{\schubertS_\lambda(\xvec)}
\coloneqq
(x_1\dotsm x_h)^{-k}\schubertS_{\lambda+k\mathbf 1}(\xvec).
\]
This is independent of \(k\).  Put
\[
\defin{\rho}\coloneqq(h-1,h-2,\dotsc,0),
\qquad
\defin{\Delta(\xvec)}\coloneqq\prod_{1\leq i<j\leq h}(x_i-x_j),
\]
and equip Laurent polynomials with the monomial pairing
\[
\defin{\langle\xvec^a,\xvec^b\rangle}
\coloneqq
\begin{cases}
1,& a=b,\\
0,& a\neq b.
\end{cases}
\]
For \(\lambda,\mu\in\setN^h\), Watanabe's duality formula
\cite[Lem.~7.4]{Watanabe2016} states that
\begin{equation}
\label{eq:watanabeDuality}
\left\langle
\schubertS_\lambda(\xvec),
\schubertS_{\rho-\mu}(\xvec^{-1})\Delta(\xvec)
\right\rangle
=
\begin{cases}
1,& \lambda=\mu,\\
0,& \lambda\neq\mu.
\end{cases}
\end{equation}

\begin{theorem}[Affine Schubert structure constants]
\label{thm:affineSchubertStructureConstants}
Fix \(\alpha,\gamma,\tau\in\setN^h\) and
\(\beta,\delta,\zeta\in\setZ^h\).  Suppose that
\(N\alpha+\beta\), \(N\gamma+\delta\), and \(N\tau+\zeta\) belong to
\(\setN^h\) for \(N\gg0\), and let \(u(N),v(N),w(N)\) be the permutations
with these respective Lehmer codes.  Then
\[
N\longmapsto c_{u(N),v(N)}^{w(N)}
\]
is eventually polynomial.
\end{theorem}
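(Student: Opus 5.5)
The plan is to write \(c_{u(N),v(N)}^{w(N)}\) as a single monomial pairing using Watanabe's duality formula \eqref{eq:watanabeDuality}, and then to recognize that pairing as a fixed finite signed sum of affine monomial coefficients of a product of stretched Schubert polynomials.

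\textbf{Step 1: choose a fixed number of variables.} The Schubert polynomials \(\schubertS_{u(N)}\) and \(\schubertS_{v(N)}\) involve only \(x_1,\dotsc,x_h\), but their product may involve Schubert polynomials with longer codes. We want a fixed \(H\ge h\) such that, for all \(N\gg0\), every \(w'\) with \(c_{u(N),v(N)}^{w'}\ne0\) has code supported in the first \(H\) positions. Schubert polynomials in \(x_1,\dotsc,x_H\) with codes in \(\setN^H\) form a basis of \(\mathbb Q[x_1,\dotsc,x_H]\). The product lies in \(\mathbb Q[x_1,\dotsc,x_h]\), so expanding it in this basis already gives codes supported in \(\{1,\dotsc,h\}\). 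Hence we may take \(H=h\), viewing all codes in \(\setN^h\) as in the statement. If \(N\tau+\zeta\) were not of this form, the constant would vanish.

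\textbf{Step 2: apply duality.} Write \(\lambda(N)=N\tau+\zeta\). Since \(\schubertS_{u(N)}\schubertS_{v(N)}=\sum_\mu c^{\mu}\schubertS_\mu\) with \(\mu\in\setN^h\), pairing with \(\schubertS_{\rho-\lambda(N)}(\xvec^{-1})\Delta(\xvec)\) and using \eqref{eq:watanabeDuality} gives
\[
c_{u(N),v(N)}^{w(N)}=\bigl\langle \schubertS_{u(N)}\schubertS_{v(N)},\ \schubertS_{\rho-\lambda(N)}(\xvec^{-1})\Delta(\xvec)\bigr\rangle .
\]
We expand \(\Delta(\xvec)=\sum_{\sigma\in\symS_h}\operatorname{sgn}(\sigma)\xvec^{\sigma\rho}\), which is a fixed finite sum. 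It remains to rewrite \(\schubertS_{\rho-\lambda(N)}(\xvec^{-1})\). Choose an affine \(k(N)=Nk_1+k_0\) making \(\rho-\lambda(N)+k(N)\mathbf 1\) nonnegative for large \(N\); for instance \(k_1=\max_i\tau_i\) with \(k_0\) large. Then \(\schubertS_{\rho-\lambda(N)}(\xvec^{-1})=(x_1\dotsm x_h)^{k(N)}\,\schubertS_{\lambda'(N)}(\xvec^{-1})\), where \(\lambda'(N)\) is an affine family in \(\setN^h\). The pairing \(\langle F,G(\xvec^{-1})\xvec^{c}\rangle\) equals \(\sum_b [\xvec^{b}]G\,[\xvec^{b+c}]F\). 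So the structure constant becomes
\[
\sum_{\sigma}\operatorname{sgn}(\sigma)\,[\xvec^{k(N)\mathbf 1+\sigma\rho}]\bigl(\schubertS_{u(N)}\schubertS_{v(N)}\,\schubertS_{\lambda'(N)}\bigr),
\]
a fixed signed sum of monomial coefficients of a triple product. The grading makes this consistent: \(b\) runs over exponents of the third factor.

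\textbf{Step 3: conclude.} Each summand is an affine monomial coefficient \([\xvec^{N\nu+\eta}]\) of a product of three Schubert polynomials whose codes are affine in \(N\). Proposition~\ref{prop:productSchubertKostka} therefore makes each summand eventually polynomial, and so the finite sum is eventually polynomial as well.

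\textbf{Main obstacle.} The delicate point is Step 2. We must check that the generalized Schubert polynomial with a negative shift is handled correctly, namely that \(\rho-\lambda(N)+k(N)\mathbf 1\) is a genuine affine code and that the pairing identity holds exactly in Laurent polynomials with \(h\) variables. We must also confirm that Watanabe's formula applies to all \(\lambda,\mu\in\setN^h\), so that no truncation in the variables creates spurious contributions. If the rewriting of the pairing proves awkward, the fallback is to expand \(\schubertS_{\rho-\lambda(N)}(\xvec^{-1})\Delta\) directly through Magyar's formula with inverted variables. The inversion \(x\mapsto x^{-1}\) preserves the root-denominator form \eqref{eq:rootDenominatorExpansion} up to sign, so Lemma~\ref{lem:fixedDemazureExpressions} would still apply.
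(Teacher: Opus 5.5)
Your proposal is correct and follows the paper's proof essentially step for step. You use Watanabe duality and expand $\Delta$ into $h!$ signed monomials. An affine shift $k(N)$ then turns the dual index into a genuine affine code, which rewrites the constant as $\sum_{\sigma\in\symS_h}\operatorname{sgn}(\sigma)[\xvec^{k(N)\mathbf 1+\sigma\rho}]\bigl(\schubertS_{u(N)}\schubertS_{v(N)}\schubertS_{\rho-(N\tau+\zeta)+k(N)\mathbf 1}\bigr)$, and Proposition~\ref{prop:productSchubertKostka} finishes.

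One slip needs fixing. The pairing identity should read $\langle F,G(\xvec^{-1})\xvec^{c}\rangle=\sum_b[\xvec^b]G\,[\xvec^{c-b}]F=[\xvec^c](FG)$, not $\sum_b[\xvec^b]G\,[\xvec^{b+c}]F$. It is this corrected version that actually yields your (correct) final formula.
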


\begin{proof}
Unless the total degree of \(N\tau+\zeta\) equals the sum of the total degrees
of \(N\alpha+\beta\) and \(N\gamma+\delta\) identically in \(N\), the
coefficient is zero for all sufficiently large \(N\).  We therefore assume
that the degrees agree.
The Schubert polynomials indexed by weak compositions in \(\setN^h\) form a
basis of \(\setZ[x_1,\dotsc,x_h]\).  Pairing the Schubert expansion of
\(\schubertS_{u(N)}(\xvec)\schubertS_{v(N)}(\xvec)\) with
\eqref{eq:watanabeDuality} gives
\[
c_{u(N),v(N)}^{w(N)}
=
\left\langle
\schubertS_{u(N)}(\xvec)\schubertS_{v(N)}(\xvec),
\schubertS_{\rho-(N\tau+\zeta)}(\xvec^{-1})\Delta(\xvec)
\right\rangle.
\]
Since
\[
\Delta(\xvec)
=\sum_{\sigma\in\symS_h}
\operatorname{sgn}(\sigma)\xvec^{\sigma\rho},
\]
the monomial pairing turns this into
\begin{equation}
\label{eq:finiteSchubertExtraction}
c_{u(N),v(N)}^{w(N)}
=
\sum_{\sigma\in\symS_h}
\operatorname{sgn}(\sigma)
[\xvec^{\sigma\rho}]
\schubertS_{u(N)}(\xvec)
\schubertS_{v(N)}(\xvec)
\schubertS_{\rho-(N\tau+\zeta)}(\xvec).
\end{equation}

Choose an integer-valued affine function
\(\defin{q(N)}\coloneqq MN+K\) such that
\(\rho-(N\tau+\zeta)+q(N)\mathbf 1\) is nonnegative for \(N\gg0\).
It suffices to take \(M>\max_i\tau_i\) and then take \(K\) sufficiently
large.  For each fixed \(N\), apply the definition of generalized Schubert
polynomials with \(k=q(N)\).  Then each summand in
\eqref{eq:finiteSchubertExtraction} is
\[
[\xvec^{q(N)\mathbf 1+\sigma\rho}]
\schubertS_{u(N)}(\xvec)
\schubertS_{v(N)}(\xvec)
\schubertS_{\rho-(N\tau+\zeta)+q(N)\mathbf 1}(\xvec).
\]
The three Schubert indices and the extracted exponent are affine in \(N\).
Proposition~\ref{prop:productSchubertKostka} shows that every summand is
eventually polynomial.  There are only \(h!\) summands, independent of \(N\),
so their signed sum is eventually polynomial.  This completes the proof.
\end{proof}

\begin{corollary}[Stretched Schubert structure constants]
\label{cor:stretchedSchubertStructureConstants}
For fixed permutations \(u,v,w\), the function
\[
N\longmapsto
c_{\stretchperm{N}{u},\stretchperm{N}{v}}^{\stretchperm{N}{w}}
\]
is eventually polynomial.
\end{corollary}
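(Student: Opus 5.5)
This corollary is the special case of Theorem~\ref{thm:affineSchubertStructureConstants} with zero affine shifts. The only work is to put the three permutations into the common form that the theorem requires, namely Lehmer codes in \(\setN^h\) for a single fixed \(h\).

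First, choose \(h\) at least as large as the length of each of \(\code(u)\), \(\code(v)\), \(\code(w)\); for instance, take \(h\) to be the maximum of the sizes of the symmetric groups containing \(u,v,w\). Pad each code with zeroes to obtain \(\alpha=\code(u)\), \(\gamma=\code(v)\), \(\tau=\code(w)\), all in \(\setN^h\). Set \(\beta=\delta=\zeta=0\). Then \(N\alpha\), \(N\gamma\), \(N\tau\) lie in \(\setN^h\) for every \(N\geq1\).

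Next, check that the permutations \(u(N),v(N),w(N)\) of Theorem~\ref{thm:affineSchubertStructureConstants} coincide with \(\stretchperm{N}{u}\), \(\stretchperm{N}{v}\), \(\stretchperm{N}{w}\). By definition, \(\stretchperm{N}{u}=\code^{-1}(N\code(u))\), where enough trailing zeroes are appended to satisfy the Lehmer-code inequalities. Appending further zeroes only adds fixed points, as noted in the introduction. Schubert polynomials and structure constants are stable under this embedding \(\symS_m\hookrightarrow\symS_{m+1}\). So \(c_{\stretchperm{N}{u},\stretchperm{N}{v}}^{\stretchperm{N}{w}}\) is the same number whichever admissible padding is chosen. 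In particular, it equals \(c_{u(N),v(N)}^{w(N)}\) as defined in the theorem.

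Theorem~\ref{thm:affineSchubertStructureConstants} then gives eventual polynomiality directly. The degree-mismatch case is already handled there: the constant is eventually zero, which is a polynomial. The only point needing care is the padding and stability issue: the theorem works with a fixed number \(h\) of variables, while stretching enlarges the ambient symmetric group. This is harmless for two reasons. Every Lehmer code involved is supported in the first \(h\) positions, and Watanabe's duality in the theorem's proof is applied in \(h\) variables to Schubert polynomials \(\schubertS_\lambda\) with \(\lambda\in\setN^h\). Such polynomials involve only \(x_1,\dotsc,x_h\), and the structure constants \(c_{u,v}^w\) are computed within the basis \(\{\schubertS_\lambda:\lambda\in\setN^h\}\). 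The only terms that drop out are those indexed by codes longer than \(h\), which cannot contribute to the coefficient of \(\schubertS_{w(N)}\) because \(\code(w(N))\in\setN^h\).
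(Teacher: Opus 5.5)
Your proposal is correct and is the paper's own proof: apply Theorem~\ref{thm:affineSchubertStructureConstants} with \((\alpha,\beta)=(\code(u),0)\), \((\gamma,\delta)=(\code(v),0)\), and \((\tau,\zeta)=(\code(w),0)\). Your padding and stability discussion is extra care that the paper omits. One small correction: no terms actually ``drop out.'' Since \(\schubertS_{u(N)}\schubertS_{v(N)}\in\setZ[x_1,\dotsc,x_h]\), uniqueness of the Schubert expansion already forces every index appearing in it to have code in \(\setN^h\).
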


\begin{proof}
Apply Theorem~\ref{thm:affineSchubertStructureConstants} with
\((\alpha,\beta)=(\code(u),0)\),
\((\gamma,\delta)=(\code(v),0)\), and
\((\tau,\zeta)=(\code(w),0)\).  This proves the corollary.
\end{proof}

This proves Conjecture~7.1 of I.~Pak and Z.~Slonim~\cite{PakSlonim2026x}.

\begin{corollary}[Stretched Littlewood--Richardson coefficients]
\label{cor:stretchedLittlewoodRichardson}
For fixed partitions \(\lambda,\mu,\nu\), the function
\[
N\longmapsto c_{N\lambda,N\mu}^{N\nu}
\]
is eventually polynomial.
\end{corollary}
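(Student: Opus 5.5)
The plan is to reduce to Corollary~\ref{cor:stretchedSchubertStructureConstants} through Lemma~\ref{lem:grassmannianStretching}, using Grassmannian permutations with a common descent position. Fix partitions \(\lambda,\mu,\nu\) and choose \(k\) at least the maximum of their lengths. Pad each partition with zeroes to length \(k\). Let \(u,v,w\) be the permutations with Lehmer codes \((\lambda_k,\dotsc,\lambda_1)\), \((\mu_k,\dotsc,\mu_1)\) and \((\nu_k,\dotsc,\nu_1)\), with enough trailing zeroes appended. These codes are weakly increasing, so by Lemma~\ref{lem:grassmannianStretching} each permutation is \(k\)-Grassmannian and \(\schubertS_u(x_1,\dotsc,x_k)=s_\lambda(x_1,\dotsc,x_k)\). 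The same lemma gives \(\schubertS_{\stretchperm{N}{u}}(x_1,\dotsc,x_k)=s_{N\lambda}(x_1,\dotsc,x_k)\), and likewise for \(v\) and \(w\).

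Next, I will identify the Schubert structure constant with the Littlewood--Richardson coefficient. This uses the standard fact that Grassmannian Schubert polynomials with descent at \(k\) are Schur polynomials in the first \(k\) variables. In the Schubert expansion
\[
\schubertS_{\stretchperm{N}{u}}\schubertS_{\stretchperm{N}{v}}=\sum_{x}c_{\stretchperm{N}{u},\stretchperm{N}{v}}^{x}\schubertS_x,
\]
both factors are symmetric polynomials in \(x_1,\dotsc,x_k\). Their product is therefore \(s_{N\lambda}s_{N\mu}=\sum_\kappa c_{N\lambda,N\mu}^\kappa s_\kappa(x_1,\dotsc,x_k)\). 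Every \(\kappa\) occurring here has at most \(k\) rows, because its size is bounded and its length is at most \(\ell(N\lambda)+\ell(N\mu)\). To keep this bound, I will choose \(k\ge\ell(\lambda)+\ell(\mu)\) from the start. Each \(s_\kappa(x_1,\dotsc,x_k)\) is the Schubert polynomial of a \(k\)-Grassmannian permutation. Since Schubert polynomials form a basis, the expansion is unique. Hence \(c_{\stretchperm{N}{u},\stretchperm{N}{v}}^{\stretchperm{N}{w}}=c_{N\lambda,N\mu}^{N\nu}\), where \(\stretchperm{N}{w}\) is the Grassmannian permutation of \(N\nu\). If \(\ell(\nu)>k\), both sides vanish; this case is avoided anyway by enlarging \(k\).

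One point needs care: the Schubert polynomials in the expansion above live in all variables, not only the first \(k\). I will handle this by noting that a \(k\)-Grassmannian Schubert polynomial involves only \(x_1,\dotsc,x_k\) and equals the corresponding Schur polynomial. This makes the identity of symmetric polynomials in \(k\) variables an identity in the full polynomial ring. Uniqueness of the Schubert expansion then transfers the coefficients.

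With this identification in place, Corollary~\ref{cor:stretchedSchubertStructureConstants} gives eventual polynomiality immediately. The main obstacle is therefore only this bookkeeping: choosing one common \(k\) and checking that the stretched Grassmannian permutations match \(\stretchperm{N}{u}\) for every \(N\). The latter is exactly the code identity in the proof of Lemma~\ref{lem:grassmannianStretching}.
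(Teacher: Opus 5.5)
Your proposal is correct and follows the paper's proof. Like the paper, you realize $s_{N\lambda},s_{N\mu},s_{N\nu}$ as Schubert polynomials of $k$-Grassmannian permutations via Lemma~\ref{lem:grassmannianStretching}, identify $c_{N\lambda,N\mu}^{N\nu}$ with a stretched Schubert structure constant by uniqueness of the Schubert expansion (a step the paper leaves implicit), and conclude with Corollary~\ref{cor:stretchedSchubertStructureConstants}. Your extra requirement $k\geq\ell(\lambda)+\ell(\mu)$ is harmless but unnecessary: $s_\kappa(x_1,\dotsc,x_k)=0$ whenever $\ell(\kappa)>k$, so the paper's choice $k\geq\max(\ell(\lambda),\ell(\mu),\ell(\nu))$ already suffices.
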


\begin{proof}
Choose \(k\) such that \(\lambda,\mu,\nu\) have at most \(k\) parts, and
realize their Schur polynomials as the Grassmannian Schubert polynomials in
Lemma~\ref{lem:grassmannianStretching}.  After code stretching, these become
\(s_{N\lambda}\), \(s_{N\mu}\), and \(s_{N\nu}\), respectively.  Hence
\(c_{N\lambda,N\mu}^{N\nu}\) is a stretched Schubert structure constant, and
the claim follows from
Corollary~\ref{cor:stretchedSchubertStructureConstants}.
\end{proof}

The stronger assertion that \(c_{N\lambda,N\mu}^{N\nu}\) is given by a
polynomial for every \(N\geq0\) was proved by H.~Derksen and
J.~Weyman~\cite{DerksenWeyman2002}.  E.~Rassart later gave a chamber-complex
proof and established a piecewise-polynomial formula in the three partition
parameters~\cite{Rassart2004}.

\section*{Acknowledgments}

We thank Igor Pak for asking about polynomiality of stretched Schubert
coefficients; that question was the starting point for this paper.
OpenAI's Codex was used during the preparation of this paper for proof
exploration and editorial assistance.

\bibliographystyle{alpha}
\bibliography{key-schubert-kostka-ALEXANDERSSON-revised}

\end{document}